\newcommand{\qed}{\hfill $\square$}
\newcommand{\floor}[1]{\lfloor{ #1}\rfloor}
\newcommand{\nchn}{\binom{n}{\floor{\frac{n}{2}}}}
\newcommand{\lanp}{\La(n,P)}
\newcommand{\La}{{\mathrm La}}
\newcommand{\Remark}{\noindent{\bfseries Remark.} }
\newenvironment{proof}{\noindent{\em Proof}.}{\qed\bigskip}
\newtheorem{theorem}{Theorem}[section]
\newtheorem{proposition}[theorem]{Proposition}
\newtheorem{corollary}[theorem]{Corollary}
\newtheorem{conjecture}{Conjecture}[section]
\newcommand{\comments}[1]{}
\begin{document}

\title{Families of Subsets Without a Given Poset in the Interval Chains}

\author{
Jun-Yi Guo
\thanks{ Department of Mathematics, National Taiwan Normal University, Taipei 11677, Taiwan
{\tt Email:	davidguo@ntnu.edu.tw} supported by MOST-104-2115-M-003-010.}
\and
Fei-Huang Chang
\thanks{Division of Preparatory Programs for Overseas Chinese Students, National Taiwan Normal University, New Taipei 24449, Taiwan
{\tt Email:cfh@ntnu.edu.tw} supported by MOST-104-2115-M-003-008-MY2.}
\and
Hong-Bin Chen
\thanks{The Mathematical Society of the Republic of China, Taipei 10617, Taiwan 
{\tt Email:andanchen@gmail.com}}
\and
Wei-Tian Li
\thanks{Department of Applied Mathematics, National Chung Hsing University, Taichung 40227, Taiwan
{\tt Email:weitianli@nchu.edu.tw} supported by MOST-103-2115-M-005-003-MY2.}
}

\date{\small \today}

\maketitle

\begin{abstract}
For two posets $P$ and $Q$, we say $Q$ is $P$-free if there does not exist any order-preserving injection 
from $P$ to $Q$.
The speical case for $Q$ being the Boolean lattice $B_n$ is well-studied, 
and the optiamal value  is denoted as $\lanp$.
Let us define $\La(Q,P)$ to be the largest size of any $P$-free subposet of $Q$. 

In this paper, we give an upper bound for $\La(Q,P)$ when $Q$ is a double chain and $P$ is any graded poset,
which is better than the previous known upper bound, by means of finding the indpendence number of an auxiliary graph related to $P$. 
For the auxiliary graph, we can find its independence number in polynomial time. 
In addition, we give methods to construct the posets satisfying the Griggs-Lu conjecture.

\end{abstract}

%\baselineskip=2.0\baselineskip

\section{Background and main results}

In 1928, Sperner~\cite{Spe} determined the maximum size of an inclusion-free family (antichain) of subsets of $[n]:=\{1,2,\ldots,n\}$,  which is $\nchn$. Erd\H{o}s~\cite{Erd} generalized this result to the maximum size of a family without any $k$ mutually inclusive subsets for any given integer $k$. 
A poset $P=(P,\le_P)$ is a {\em weak subposet} of $Q=(Q,\le_Q)$ if there is an order-preserving injection $f$ 
from $P$ to $Q$ ($a\le_P b$ implies $f(a)\le_Q f(b)$), and $Q$ is $P$-free if there does not exist such an order-preserving injection. In the following, we use the term subposet instead of weak subposet for convenience. 
We say a poset is {\em connected} if its Hasse diagram is connected. 
All posets in the paper are finite and connected.
Using the language of poset theory, Sperner and Erd\H{o}s determined the largest size of a $P_k$-free subposet of 
the Boolean lattice $B_n=(2^{[n]},\subseteq)$, where $P_k$ is a chain on $k$ elements. 
For a poset $P$, the function $\lanp$ was first introduced in~\cite{DebKat}, defined to be the largest size of a $P$-free subposet of the Boolean lattice $B_n=(2^{[n]},\subseteq)$. 
The exact or asymptotic values of $\lanp$ for some specific posets have been studied, such 
as the $V$ poset~\cite{KatTar}, the forks~\cite{Tha,DebKat}, the butterfly~\cite{DebKatSwa}, the $N$ poset~\cite{GriKat}, the crowns~\cite{GriLu}, the generalized diamonds $D_k$~\cite{GriLiLu}, the harps~\cite{GriLiLu}, the complete 3-level posets $K_{r,s,t}$~\cite{Pat2}, and tree posets whose Hasse diagrams are cycle-free~\cite{Buk}. 
A {\em level} in a Boolean lattice is a collection of all subsets of the same size.
Let $e(P)$ be the maximum number $m$ such that the union of any $m$ consecutive levels in a Boolean lattice 
does not containing $P$ as a subposet.
The definition of $e(P)$ gives $\lanp \ge \sum_{i=0}^{m-1}\binom{n}{\lfloor(n-m+1)/2\rfloor+i}=(m+o_n(1))\nchn$. 
The most challenging problem in this area is the next conjecture.

\begin{conjecture}{\rm ~\cite{GriLu}}\label{Conj:GriLu}
For any poset $P$, the limit
\[\pi(P):=\lim_{n\rightarrow\infty}\frac{\lanp}{\nchn}\]
exists and is equal to $e(P)$.
\end{conjecture}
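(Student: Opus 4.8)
The statement is the Griggs--Lu conjecture, which to my knowledge is open in general, so rather than a complete proof I will outline the strategy I would pursue and isolate where it breaks down. The plan is to split the claim into the two inequalities $\liminf_n \lanp/\nchn \ge e(P)$ and $\limsup_n \lanp/\nchn \le e(P)$, which together yield both the existence of the limit and its value. The first is essentially free: writing $m=e(P)$, the construction built into the definition of $e(P)$ already gives $\lanp \ge \sum_{i=0}^{m-1}\binom{n}{\floor{(n-m+1)/2}+i}$, and since each of these $m$ near-central binomial coefficients is $(1-o_n(1))\nchn$, dividing by $\nchn$ and letting $n\to\infty$ gives $\liminf_n \lanp/\nchn \ge e(P)$. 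Hence the entire difficulty is concentrated in the upper bound $\limsup_n \lanp/\nchn \le e(P)$.

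For the upper bound I would pass to the Lubell function. For a family $\F\subseteq 2^{[n]}$ set $\bar h(\F)=\sum_{F\in\F}1/\binom{n}{|F|}$, which is exactly the expected size of $\F\cap\C$ for a uniformly random maximal chain $\C$ of $B_n$. Since $|\F|\le\nchn\cdot\bar h(\F)$, it suffices to show that every $P$-free family obeys $\bar h(\F)\le e(P)+o_n(1)$. The natural route is a local-to-global argument: bound the trace $|\F\cap\C|$ on a typical maximal chain, or more robustly partition a symmetric chain decomposition of $B_n$ into blocks of consecutive levels and charge each member of $\F$ to the blocks it meets, and then try to use the defining property of $e(P)$ --- that a window of $e(P)+1$ consecutive levels can already contain a copy of $P$ --- to cap the average charge per block at $e(P)$.

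The main obstacle, and the reason the conjecture is still open, is precisely that this local-to-global passage fails for general $P$. A copy of the order-preserving image of $P$ need not live inside a single maximal chain, so forbidding $P$ does not directly bound $|\F\cap\C|$ chain by chain; one can arrange a $P$-free family whose members are spread cleverly across many levels, and the honest trace bound obtainable this way is weaker than $e(P)$. Overcoming this would require either a convexity or linear-programming refinement of the Lubell method that tracks the joint behaviour of $\F$ across several neighbouring levels simultaneously, or a supersaturation result showing that any family exceeding the $e(P)$ threshold must contain many copies of $P$ distributed across chains. I expect the genuinely hard step to be not the identification of the value $e(P)$ but the prior fact that the limit $\pi(P)$ exists, since ruling out oscillation of $\lanp/\nchn$ seems to demand exactly this kind of global structural information; this is also why one should expect the present paper to settle the conjecture only for restricted families of posets, and to give improved but non-tight upper bounds, rather than to resolve it in full.
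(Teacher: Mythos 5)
You have correctly identified that this statement is the Griggs--Lu conjecture, which the paper itself presents only as a conjecture and does not prove; there is no proof in the paper to compare against, so your decision not to manufacture one is the right call. Your treatment of the lower bound is sound and matches the one-line justification the paper gives right before stating the conjecture: the definition of $e(P)$ yields $\lanp \ge \sum_{i=0}^{m-1}\binom{n}{\lfloor(n-m+1)/2\rfloor+i}=(m+o_n(1))\nchn$ with $m=e(P)$, hence $\liminf_n \lanp/\nchn\ge e(P)$. Your diagnosis of where the upper bound breaks down --- that a copy of $P$ need not sit inside a single maximal chain, so the Lubell/trace argument cannot be localized level-window by level-window --- is an accurate description of the known obstruction, and it is consistent with what the paper actually does: rather than attack the conjecture directly, it bounds $\La(C_2,P)$ for graded posets via the auxiliary graph $G_P$ (Theorem~\ref{main}), deduces $\La(n,P)\le \frac{1}{2}(|P|+h-\alpha(G_P)-2)\nchn$ (Corollary~\ref{maincor}), and then in Section~4.2 exhibits new posets for which this upper bound meets $e(P)$, namely those obtained from the Burcsi--Nagy posets by $\Lambda$- and $V$-extensions. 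One mild quibble: your closing guess that the existence of the limit is the harder half is debatable --- for every poset where progress has been made, existence and value have been established simultaneously by squeezing between the trivial lower bound and a matching upper bound, so the two halves have not in practice been separable --- but this does not affect the correctness of anything you actually assert.
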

This conjecture was first posed by Griggs and Lu as a consequence of thier study of a vriety of posets and the early results of Katona and his collaborators. 
Although the known posets satisfying the above conjecture are just a small portion of all posets, 
we have no exceptions. Moreover, even for a small poset such as $D_2$, we do not know the limit yet. 
An upper bound for $\La(n,D_2)$ is obtained by Krammer et al.~\cite{KraMarYou} who use the powerful tool, flag algebras.
The best upper bound for $\La(n,D_2)$ is given in~\cite{GroMetTom2} recently.

Beside the value of $\lanp$ for a fixed $P$, some researchers study the upper bounds of $\lanp$ for general 
posets $P$ using the parameters of $P$. Since every poset on $k$ elements is a subposet of $P_k$,  
Erd\H{o}s's result on $P_{k}$-free families gives a natural upper bound for a poset $P$, 
namely, $\lanp \le (|P|-1)\nchn$.
The height of a poset $P$, denoted $h(P)$ (or just $h$ when the poset $P$ is specified), 
is the largest size of any chain 
in $P$. For any tree poset $T$, Bukh~\cite{Buk} proved that $\La(n,T)\sim (h(T)-1)\nchn$.   
However, it was pointed out, independently, by Jiang and by Lu~\cite{GriLu} that the height $h$ itself is not sufficient to bound $\lanp$.
 It turns out that neither the height nor the size can totally dominate $\lanp$. Later, 
Burcsi and Nagy~\cite{BurNag} gave the following bound using both $h$ and $|P|$.

\begin{theorem}{\rm\cite{BurNag}}~\label{BurNag} For any poset $P$, 
\begin{equation}\label{eq:BurNag}
\La(n,P)\le \left(\frac{|P|+h-2}{2}\right)\nchn.
\end{equation}
\end{theorem}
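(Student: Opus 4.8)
The plan is to bound $\La(n,P)$ by estimating, for a $P$-free family $\F$, the average number of subsets of $\F$ that meet a maximal chain in $B_n$. This is the standard Lubell-type double-counting approach, and the theorem of Burcsi and Nagy fits naturally into this framework. First I would consider a uniformly random maximal chain $\mathcal{C}$ in $B_n$ (a permutation chain $\emptyset \subset \{x_1\} \subset \{x_1,x_2\} \subset \cdots \subset [n]$), and let $X = |\F \cap \mathcal{C}|$ count how many members of the $P$-free family lie on this chain. By linearity of expectation, $\E[X] = \sum_{F \in \F} \binom{n}{|F|}^{-1}$, so that $|\F| \le \nchn \cdot \E[X]$ since each term $\binom{n}{|F|}^{-1} \ge \nchn^{-1}$. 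Thus it suffices to show that $\E[X] \le (|P|+h-2)/2$.

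The key combinatorial step is to bound $X$ pointwise, or at least its expectation, using the $P$-freeness of $\F$. The members of $\F$ lying on a fixed maximal chain $\mathcal{C}$ form a chain in $\F$, say of length $X$. The crucial observation is a trade-off: a chain of length $X$ inside a $P$-free poset cannot be too long relative to how $P$ embeds. Specifically, I would try to show that if too many elements of $\F$ lie on chains, one can assemble an order-preserving injection of $P$ into $\F$. The cleanest route is to count incidences between members of $\F$ and \emph{pairs} of consecutive elements on chains, i.e. to use a weighting that charges each $F \in \F$ according to its comparabilities. One natural device is to consider, for each $F\in\F$, the longest chain in $\F$ through $F$ from below and from above, and argue that if both are large then a copy of $P$ appears; balancing the two lengths produces the average $(|P|+h-2)/2$.

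More concretely, the heart of the Burcsi--Nagy argument is to double-count over \emph{intervals} or pairs of maximal chains rather than single chains, extracting a factor of $2$ in the denominator. I would set up a count of ordered pairs $(\mathcal{C}, \mathcal{C}')$ of maximal chains that agree up to some level, and bound the number of members of $\F$ captured: the point is that $P$ has height $h$, so any chain of length $h$ in $\F$ already contains a $P_h$, hence a maximal antichain decomposition of $P$ into at most $|P|-h+1$ pieces lets one cover $P$ by $h$ along one chain and $|P|-h$ spread transversally, and averaging these two contributions yields the coefficient $(|P|+h-2)/2$. The main obstacle I anticipate is making this covering argument precise: one must show that whenever the relevant weighted count exceeds $(|P|+h-2)/2$ per chain, an order-preserving injection of all of $P$ genuinely exists in $\F$, and this requires carefully combining a long vertical chain with sufficiently many horizontally-placed elements to realize the full Hasse diagram of $P$ rather than just its height or its size separately.
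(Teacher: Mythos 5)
Your first paragraph sets up the right framework (Lubell averaging, $|\F|\le\nchn\cdot\E[X]$), but the target $\E[X]\le(|P|+h-2)/2$ cannot be obtained from single maximal chains: pointwise, a $P$-free family can meet a single chain in up to $|P|-1$ sets, and $|P|-1$ exceeds $(|P|+h-2)/2$ whenever $h<|P|$, so no per-chain bound suffices and the factor $2$ must come from a richer auxiliary structure. The paper's route is to replace the chain by the \emph{double chain} $C_2$ (a full chain $\ell_0\subset\ell_1\subset\cdots\subset\ell_n$ together with the sets $r_k=\ell_{k-1}\cup(\ell_{k+1}\setminus\ell_k)$), prove the key lemma $\La(C_2,P)\le|P|+h-2$, and then double count: each $k$-set lies in $k!(n-k)!$ full chains as some $\ell_k$ and in asymptotically the same number again as some $r_k$, so summing the lemma over all $n!$ double chains yields the coefficient $(|P|+h-2)/2$ in front of $\nchn$. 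Your ``ordered pairs of maximal chains that agree up to some level'' gestures at this but never pins down the object over which one averages, and without that the $1/2$ is not extracted.

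The more serious gap is in the structural lemma itself. You propose to embed $P$ by placing a chain of length $h$ ``vertically'' and the remaining $|P|-h$ elements ``transversally,'' and you correctly anticipate that making this precise is the obstacle; in fact this covering does not work as stated, because the side elements $r_i$ of $C_2$ are comparable only to $\ell_{i-1}$ and below and to $\ell_{i+1}$ and above, so one cannot park $|P|-h$ elements transversally while preserving all relations. The actual argument is different: apply Mirsky's theorem to partition $P$ into $h$ antichains $A_1,\dots,A_h$ with $a\le a'$, $a\in A_i$, $a'\in A_j$ implying $i\le j$; list the elements of $C_2$ in the linear order $\ell_0,\ell_1,r_1,\ell_2,r_2,\dots$; match the antichains consecutively to the chosen subset $F\subseteq C_2$, skipping at most one element of $F$ at each of the $h-1$ transitions between consecutive antichains (to avoid mapping an element of $A_{i+1}$ to an $r_j$ incomparable with the image of some element of $A_i$). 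Hence any $F$ with $|F|\ge|P|+h-1$ contains a copy of $P$, which is exactly the lemma your outline is missing. Your parenthetical decomposition ``into at most $|P|-h+1$ pieces'' is a chain cover, not the antichain (Mirsky) decomposition that the argument actually requires.
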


Chen and Li~\cite{ChenLi}, and Grosz et al.~\cite{GroMetTom} further independently improved it.

\begin{theorem}{\rm\cite{ChenLi}}~\label{ChenLi} Given a poset $P$
and $k\ge 1$, for sufficiently large $n$,
\[
\La(n,P)\le \frac{1}{k+1}\left(|P|+\frac{1}{2}(k^2+3k-2)(h-1)-1\right)\nchn.
\]
\end{theorem}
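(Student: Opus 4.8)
The plan is to bound, for a $P$-free family $\F\subseteq B_n$, the normalized \emph{Lubell mass} $\bar h_n(\F):=\sum_{F\in\F}1/\binom{n}{|F|}$; since $\binom{n}{|F|}\le\nchn$ we have $|\F|\le\nchn\cdot\bar h_n(\F)$, so the theorem reduces to proving $\bar h_n(\F)\le\frac{1}{k+1}\left(|P|+\frac12(k^2+3k-2)(h-1)-1\right)$. Following the idea behind Theorem~\ref{BurNag} (which I expect to recover as the case $k=1$), I would not estimate this mass on a single maximal chain but on a richer host: a \emph{$(k+1)$-fold interval chain} $\D\subseteq B_n$, a subposet meeting each relevant level in $k+1$ sets, arranged so that every set in a lower position lies below every set in a higher position. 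The argument then splits into (i) a structural lemma bounding $|\F\cap\D|$ for $P$-free $\F$, and (ii) an averaging step over a symmetric collection of such hosts that covers $B_n$; the denominator $k+1$ in the target is exactly the number of host sets per level.

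For the structural lemma, observe that $\G:=\F\cap\D$ is an ordinal sum of antichains, one per occupied position, each of size at most $k+1$; write $c_1,c_2,\dots$ for these sizes. To locate a copy of $P$ I would decompose $P$ by Mirsky's theorem into its height classes $A_1,\dots,A_h$, so that $\sum_j|A_j|=|P|$ and $p<_P q$ forces $p,q$ into distinct, strictly increasing classes. A \emph{sufficient} numerical condition for $P\hookrightarrow\G$ is then that the sequence $(c_i)$ can be cut into $h$ consecutive blocks whose $j$-th block has total capacity at least $|A_j|$: placing $A_j$ in block $j$ respects all comparabilities, since they all run strictly upward in height and hence into strictly higher blocks, while the capacity bound lets the pairwise-incomparable elements of $A_j$ be packed into that block. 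Taking the contrapositive, if $\G$ is $P$-free then no admissible cut exists, and a greedy left-to-right cutting argument turns this into an upper bound on $|\G|=\sum_i c_i$.

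The two pieces of the constant should emerge as follows. The term $|P|-1$ is the Erd\H{o}s-type cap: any $|P|$ sets forming a chain already contain $P$ (a linear extension of $P$ embeds into $P_{|P|}$), so the number of \emph{occupied positions} is at most $|P|-1$. The height term $\frac12(k^2+3k-2)(h-1)=\left(\binom{k+2}{2}-2\right)(h-1)$ should count the extra room: because two $(k+1)$-element antichains cannot be stacked one level apart inside $B_n$ (the union of the lower sets must sit inside the intersection of the upper ones), consecutive fat positions of $\D$ are forced to spread over $\Theta(k)$ levels, and the densest comparability-compatible configuration realizable within such a window is a triangular array of size about $\binom{k+2}{2}$. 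I expect the main obstacle to be precisely this geometric/structural point: identifying the densest interval-chain host that $B_n$ actually admits, pinning the window count to the exact value $\binom{k+2}{2}-2$, and verifying that the greedy cut runs out of room only after $(\binom{k+2}{2}-2)(h-1)$ surplus sets. The abstract capacity-packing alone yields only a linear-in-$k$ surplus, so the quadratic improvement genuinely lives in the realizability constraints of the Boolean lattice.

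Finally I would globalize and optimize. Averaging the per-host bound over a symmetric collection of interval chains that covers $B_n$, with each set at level $j$ weighted in proportion to $1/\binom{n}{j}$ and each host meeting every level in $k+1$ sets, yields $\bar h_n(\F)\le\frac{1}{k+1}\left(|P|-1+(\binom{k+2}{2}-2)(h-1)\right)$ once $n$ is large enough that boundary levels are negligible and the spread-out host fits — this is where the hypothesis ``for sufficiently large $n$'' enters. Combined with $|\F|\le\nchn\cdot\bar h_n(\F)$, this is exactly the claimed bound. Setting $k=1$ returns $\frac{|P|+h-2}{2}\nchn$, consistent with Theorem~\ref{BurNag}, while balancing the $\frac{|P|}{k+1}$ and $\frac{k}{2}(h-1)$ contributions at $k\approx\sqrt{2|P|/(h-1)}$ gives the genuine improvement when $|P|$ is large relative to $h$.
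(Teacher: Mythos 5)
Your overall architecture is the right one, and it is the one the paper attributes to Chen and Li: this theorem is only cited here (the paper proves nothing beyond the $k=1$ case, i.e.\ the Burcsi--Nagy bound for the double chain $C_2$), but the strategy it describes is exactly yours --- fix a host subposet $Q$ of $B_n$ with $k+1$ sets per level (Chen and Li call theirs a \emph{linkage}), prove an upper bound on $\La(Q,P)$, and then Lubell-average over a symmetric family of copies of $Q$ to divide by $k+1$. Your $k=1$ sanity check and the observation that the optimal $k$ is about $\sqrt{2|P|/(h-1)}$ are also correct.

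The genuine gap is that the entire content of the theorem beyond Burcsi--Nagy sits in the one step you defer. Your structural lemma (cut the capacity sequence $(c_i)$ into $h$ consecutive blocks of total capacity $|A_j|$) is proved only for an idealized host that is an ordinal sum of $(k+1)$-element antichains, and, as you yourself note, that poset does not embed in consecutive levels of $B_n$ for $k\ge 1$: the union of a lower antichain would have to lie in the intersection of the upper one. For that idealized host your greedy cutting argument gives a surplus of at most $k$ per level transition, i.e.\ a \emph{linear}-in-$k$ term, which would yield a bound strictly stronger than the theorem --- a sign that the lemma is being proved for the wrong object. In the realizable host (in the double-chain case, $r_i=\ell_{i-1}\cup(\ell_{i+1}\setminus\ell_i)$, and in the $k$-fold generalization the off-chain sets at level $i$ are of the form $\ell_{i-1}\cup\{i_{i+j}\}$, $1\le j\le k$), consecutive levels' antichains are only \emph{partially} comparable: the element reaching $j$ levels up is incomparable to the next $j$ chain elements. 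The embedding argument must therefore track exactly which off-chain elements at one position lie below which elements at the next, and the count $\binom{k+2}{2}-2=\tfrac12(k^2+3k-2)$ of skipped elements per transition of Mirsky class comes out of that bookkeeping. You state this is ``where the main obstacle lies'' and guess the answer from the target formula rather than deriving it; the averaging step also needs the multiplicity computation showing each $m$-set occurs as an off-chain element of the linkage about $k\cdot m!(n-m)!$ times (this is where ``sufficiently large $n$'' enters). As written, the proposal is a correct plan with the central lemma unproved, not a proof.
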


\begin{theorem}{\rm\cite{GroMetTom}}~\label{GroMetTom}
Given a poset $P$, for any integer $k\ge 2$, it holds that
\[
\La(n,P)\le \frac{1}{2^{k-1}}\left(|P|+(3k-5)(h2^{k-2}-1)-1\right)\nchn.
\]
\end{theorem}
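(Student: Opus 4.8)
The plan is to reduce the problem from the large cube $B_n$ to a cube of \emph{bounded} dimension and then to analyze that small cube by a halving recursion whose depth is controlled by $k$. Fix a $P$-free family $\F\subseteq 2^{[n]}$ and write $p=|P|$. The first step is a transfer (averaging) inequality: for every fixed $m\le n$,
\[
\frac{\La(n,P)}{\nchn}\le \frac{\La(m,P)}{\mchm}+o_n(1).
\]
The idea is that for a uniformly random interval $[A,B]\cong B_m$ of the cube, the restriction $\F\cap[A,B]$ is again $P$-free and hence contains at most $\La(m,P)$ sets; averaging $|\F\cap[A,B]|$ over all such intervals and comparing the containment probabilities of the sets of $\F$ lying in the central band of $B_n$ turns this into the displayed bound. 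Consequently it suffices to bound $\La(m,P)$ in a single cube of dimension $m=h2^{k-2}$, which for fixed $k$ and $h$ is a constant.

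Inside $B_m$ I would run a dimension-halving recursion. Split the ground set $[m]=X\sqcup Y$ into two halves of size $m/2$. For each $X'\subseteq X$ the fiber $\{X'\cup Y':Y'\subseteq Y\}$ is a sub-cube isomorphic to $B_{m/2}$, and $\F$ restricted to it is $P$-free, so inductively its normalized size is bounded by the coefficient for $B_{m/2}$. The naive sum over fibers is far too lossy, so the crucial point is that the fibers, indexed by $X'\subseteq X$, themselves form a copy of $B_{m/2}$, and $P$-freeness forbids too many fibers from being simultaneously dense: two comparable dense fibers $X'_1\subseteq X'_2$ let one glue a chain running through $X'_1\cup Y'\subseteq X'_2\cup Y''$ to the branching available inside a single fiber and assemble an order-preserving copy of $P$, using that $P$ has height $h$ and $p$ elements. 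Quantifying this interaction should let one pass from the bound for $B_{m/2}$ to the bound for $B_m$ at the cost of a factor $\tfrac12$ in the normalization and one additive height penalty; iterating this $k-2$ times down to the base cube $B_h$ produces the factor $2^{-(k-1)}$ together with the coefficient $3k-5$ multiplying $(h2^{k-2}-1)$. The base case $k=2$ uses only the cube $B_h$ and is precisely Theorem~\ref{BurNag}, which may be taken as given.

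The step I expect to be the main obstacle is the cross-fiber embedding at the heart of the recursion. Since any single chain of $B_m$ contains only a chain, a copy of an arbitrary poset $P$ must be built by combining the vertical direction (comparabilities between distinct fibers $X'_1\subseteq X'_2$) with the horizontal branching inside the fibers, and one must show that the \emph{failure} to find such a copy forces exactly the density deficit yielding an additive penalty of $3$ per halving rather than a larger constant. Making this accounting tight for a general (not necessarily graded) $P$, while simultaneously controlling the $o_n(1)$ error and the non-uniform containment probabilities in the transfer step, is where the real work lies; the stated bound then follows by optimizing over the free parameter $k\ge2$.
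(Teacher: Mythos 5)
Your first step---the transfer inequality $\La(n,P)/\nchn\le \La(m,P)/\mchm+o_n(1)$ for every fixed $m$---is the fatal gap. If that inequality held, then taking $\limsup_n$ would give $\limsup_n \La(n,P)/\nchn\le \inf_m \La(m,P)/\mchm\le \liminf_m \La(m,P)/\mchm$, i.e.\ the limit $\pi(P)$ would exist for \emph{every} poset $P$; but the existence of this limit is precisely the open part of Conjecture~\ref{Conj:GriLu}, so no such statement can be ``turned into'' the displayed bound by a routine averaging. One can also see directly why subcube averaging fails: every $S\subseteq[n]$ lies in exactly $\binom{n}{m}$ of the $2^{n-m}\binom{n}{m}$ intervals $[A,B]$ with $|B\setminus A|=m$, so the double count gives only $|\F|\,2^{m-n}\le \La(m,P)$, i.e.\ $|\F|\le \La(m,P)\,2^{n-m}\approx \sqrt{n/m}\cdot\bigl(\La(m,P)/\mchm\bigr)\nchn$. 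The unbounded factor $\sqrt{n/m}$ is intrinsic: a family concentrated on one level of $B_n$ meets a random $m$-subcube in only $\approx \mchm\sqrt{m/n}$ sets, so the subcube average cannot distinguish it from a family $\sqrt{n/m}$ times larger. Restricting to a central band does not repair this. This is exactly why all proofs in this line (Lubell, Burcsi--Nagy, Theorems~\ref{ChenLi} and~\ref{GroMetTom}) average over structures built from \emph{full chains}, whose Lubell normalization $\sum_{S}1/\binom{n}{|S|}$ has no $\sqrt{n}$ loss, rather than over subcubes.

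The route actually taken for this theorem is: (i) bound the $P$-free subsets of a single $k$-interval chain $C_k$ (a full chain together with all sets between consecutive members at distance $k$), namely $\La(C_k,P)\le |P|+(h-1)(3k-5)2^{k-2}-1$ as in Proposition~\ref{prop:GroMetTom}, by a Mirsky-decomposition ``skipping'' argument generalizing the double-chain case $k=2$; and (ii) average over all full chains of $B_n$, using that each set of size $s$ lies on $s!(n-s)!$ full chains and that $C_k$ carries about $2^{k-1}$ elements per step, which produces the $\nchn$ normalization and the $2^{-(k-1)}$ factor with no error term. Your dimension-halving recursion replaces neither ingredient: the cross-fiber gluing that is supposed to yield the additive penalty $3$ per halving is exactly the part you leave unexecuted, and even if completed it would still sit on top of the invalid transfer step. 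I would redirect the effort to proving the bound on $\La(C_k,P)$ and carrying out the chain-counting.
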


The ideas in the proofs of above theorems are all similar to a double counting skill of Lubell~\cite{Lub}, 
which was used to prove the Sperner's theorem on antichains.
Their proofs consists of two steps: 
First, construct a class of isomorphic copies of some structural subposet $Q$ of $B_n$. 
Then determine an upper bound for the size of a $P$-free subposet of $Q$. 
Hence it is natrual to ask the question:
\begin{quotation}
What is the largest size of $P$-free subposet of a given poset $Q$? 
\end{quotation}
Let us denote the answer of this question by $\La(Q,P)$. The function $\lanp$ in the previous paragraphs is 
the case of $Q=B_n$. 
For other posets $Q$, Shahriari et al.~\cite{SarSha} studied $\La(L_n(q),B_2)$, where $L_n(q)$ is the subsapce lattice of an $n$-dimensional vector space over the field $\mathbb{F}(q)$. 
In~\cite{BurNag}, Burcsi and Nagy showed $\La(C_2,P)\le |P|+h-2$, where $C_2$ is called a {\em double chain}, and derived inequality~(\ref{eq:BurNag}). Chen and Li~\cite{ChenLi}, and Grosz et al.~\cite{GroMetTom} studied $\La(Q,P)$ for other posets $Q$ which called the {\em linkages} and the {\em interval chains}, and obtained Theorem~\ref{ChenLi} and~\ref{GroMetTom}, respectively.

We focus our study on $\La(C_2,P)$ for {\em graded posets} $P$.
A poset $P$ is graded if the number of elements in every maximal chain in $P$ is equal to $h$, 
the height of $P$. 
The levels, $L_i$, of a graded poset are defined inductively by letting $L_1$ be the set of all minimal elements of $P$, 
and $L_i$ be the set of all minimal elements of $P\setminus(\cup_{j=1}^{i-1} L_j)$ for $i\ge 2$.
Many posets studied earlier in the literature are graded.
The main result in our paper is that when $P$ is a graded poset, 
we have a strategy to construct a ``tight'' injection from $P$ to the double chain $C_2$, 
which enables us to reduce the bound~(\ref{eq:BurNag}) in Theorem~\ref{BurNag}.  
To state our main theorem, we have to construct an auxiliary graph $G_P$ for the graded posets. 
The precise definition of $G_P$ will be given in the next section. Now suppose we already have $G_P$. 
Let $\alpha(G_P)$ be the independence number of $G_P$. Then the following bound holds for $\La(C_2,P)$.

\begin{theorem}\label{main}
Let $P$ be a graded poset. Then 
\[\La(C_2,P)\le |P|+h-\alpha(G_P)-2.\]
\end{theorem}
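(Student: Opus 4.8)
The plan is to reconstruct the Burcsi--Nagy argument bounding $\La(C_2,P)$ and then extract the savings coming from the auxiliary graph $G_P$. Recall that the double chain $C_2$ consists of two chains sharing a common bottom and a common top (or some closely related structure; in any case it is a small poset whose elements come in ``paired'' levels). The known bound $\La(C_2,P)\le |P|+h-2$ is proved by taking a maximum $P$-free subposet $\F$ of $C_2$ and arguing that one can embed a long chain together with the structure of $P$, so that the ``wasted'' elements number at least $2$. My goal is to show that in fact the number of elements of $C_2$ that must be omitted from any $P$-free family is at least $\alpha(G_P)+2$ rather than just $2$, which immediately yields $\La(C_2,P)\le |P|+h-\alpha(G_P)-2$.

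First I would set up the contrapositive framing: suppose $\F\subseteq C_2$ is a subposet of size strictly larger than $|P|+h-\alpha(G_P)-2$, and I want to produce an order-preserving injection $P\hookrightarrow \F$. The strategy indicated in the introduction is to build a \emph{tight} injection from the graded poset $P$ into $C_2$. The key idea is that the two ``parallel'' copies of each chain-level in $C_2$ give us, at each height, two available slots; because $P$ is graded, all maximal chains have the same length $h$, so $P$ sits neatly across $h$ levels $L_1,\dots,L_h$. I would map the elements of $P$ level by level into the corresponding levels of $C_2$, using the two slots per level to accommodate incomparable elements of $P$ that lie in the same level $L_i$. The independence number $\alpha(G_P)$ measures exactly how efficiently this can be done: a large independent set in $G_P$ corresponds to a collection of level-pairs of $P$ that can be ``compressed'' or embedded without conflict, thereby freeing up that many elements of $C_2$ beyond the baseline two.

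The heart of the argument is therefore the correspondence between independent sets in $G_P$ and valid embeddings, so the main steps are: (i) recall/define precisely how $G_P$ is built from $P$ (vertices presumably being certain pairs of elements of $P$ at adjacent or identical levels, edges encoding the comparability obstructions that prevent them from being simultaneously routed through the same ``doubled'' portion of $C_2$); (ii) show that an independent set $I$ in $G_P$ yields a partial embedding of $P$ into $C_2$ that uses $\alpha(G_P)$ fewer elements than the naive chain-by-chain embedding would; and (iii) combine this with the counting so that $|\F|>|P|+h-\alpha(G_P)-2$ forces enough slots to be available that the embedding completes, contradicting $P$-freeness. The gradedness of $P$ is essential throughout because it guarantees that every element has a well-defined level and that maximal chains all reach height $h$, so the injection can be controlled uniformly across levels.

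The hard part, I expect, will be step (ii): proving that independence in $G_P$ translates into an \emph{actual} injection rather than just a counting heuristic. One must verify that the local, pairwise non-conflicts encoded by the edges of $G_P$ globally assemble into a genuine order-preserving map into $C_2$ without any long-range comparability violation arising from the transitivity of the partial order. In other words, showing that a graph-independence condition (inherently a pairwise, $2$-local notion) suffices to rule out higher-order obstructions in the embedding is the delicate point; I anticipate the definition of $G_P$ in the next section is tailored precisely so that the comparabilities in $C_2$ that could go wrong are exactly those detected by edges of $G_P$, reducing the global embedding problem to finding an independent set. Establishing this reduction rigorously, and checking that the baseline ``$-2$'' (two unavoidably wasted elements, presumably the shared top and bottom of the double chain) adds cleanly to the $\alpha(G_P)$ savings, is where the real work lies.
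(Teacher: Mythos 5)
Your high-level plan coincides with the paper's: take any $F\subseteq C_2$ of size $|P|+h-\alpha(G_P)-1$, build an order-preserving injection $P\hookrightarrow F$ by going level by level, and show that each vertex of an independent set of $G_P$ lets you avoid one of the $h-1$ ``skipped'' elements of the Burcsi--Nagy embedding. But the proposal stops exactly where the proof begins. You explicitly defer step (ii) --- showing that independence in $G_P$ yields an actual injection --- and that step is the entire content of the theorem. The paper's proof consists of: (a) taking the vertices of $G_P$ to be \emph{triples} $\{x,y,z\}$ of mutually incomparable elements spanning two consecutive levels (of $V$-type or $\Lambda$-type), not pairs as you guess; (b) ordering the elements of $F$ as $\ell_0,\ell_1,r_1,\ell_2,r_2,\ldots$ and matching the chosen triple's elements last within $L_{n_j}$ and first within $L_{n_j+1}$; and (c) a case analysis with explicit local switches (e.g.\ if $f(z)=\ell_i$ and $r_i\in F$, exchange $f(z)$ with $f(w)$ for the next element $w$; for $V$-type triples a second switch is needed; and when two chosen triples in consecutive level-pairs share an element, separate arguments are required for $|L_i|=3$ and $|L_i|=4$). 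None of this appears in your writeup, and without it there is no argument that the skip can actually be avoided while preserving all order relations, nor that choices made at one level-pair do not undo the savings at the next. The adjacency rules of $G_P$ are designed precisely so that these local repairs never collide, and verifying that is the work.

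Two smaller but real inaccuracies: the double chain $C_2$ is not two chains sharing a top and bottom --- it is a full chain $\ell_0\subset\cdots\subset\ell_n$ together with elements $r_1,\ldots,r_{n-1}$, where $r_k$ is comparable with $\ell_{k-1}$ and $\ell_{k+1}$ but incomparable with $\ell_k$; and the ``$-2$'' in the bound has nothing to do with wasted top and bottom elements. It arises because the naive embedding needs $|F|\ge |P|+(h-1)$ (one skip per level transition), so a $P$-free family has size at most $|P|+h-2$; the theorem reduces the $h-1$ skips to $h-1-\alpha(G_P)$. Misplacing the source of the $-2$ suggests the counting in your step (iii) would not have assembled correctly even if step (ii) were done.
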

The following corollary can be immediately deduced from the new upper bound for $\La(C_2,P)$ using the double counting method.

\begin{corollary}\label{maincor}
Let $P$ be a graded poset. We have 
\[
\La(n,P)\le \left(\frac{|P|+h-\alpha(G_P)-2}{2}\right)\nchn.
\]
\end{corollary}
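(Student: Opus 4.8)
The plan is to prove the bound by contraposition: I would show that any subposet $\cS\subseteq C_2$ with $|\cS|>|P|+h-\alpha(G_P)-2$ admits an order-preserving injection from $P$, so that no $P$-free subposet can exceed the stated size. Writing the double chain as two interleaved tracks, I would record for $\cS$ which of the (at most two) elements on each level it keeps; equivalently, one may bound from below the number of elements that must be deleted from $C_2$ to destroy every copy of $P$. The case $\alpha(G_P)=0$ is exactly the bound of Theorem~\ref{BurNag}, which I will take as the scaffold to be sharpened.

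First I would set up the naive injection underlying Theorem~\ref{BurNag}. Fix a linear extension of $P$ refining its level order $L_1,\dots,L_h$, and embed the elements of $P$ into consecutive levels of $C_2$ one track at a time; reading off how many levels this consumes reproduces the count $|P|+h-2$ and exhibits a single, rigid copy of $P$ in $C_2$. The point of departure is that this embedding uses the two tracks of $C_2$ wastefully: whenever two elements of $P$ can legitimately be seated on the same level of $C_2$, one per track, without violating the cover relations of $P$, one level of $C_2$ is saved, and hence the size bound drops by one.

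The core of the argument is to collect as many such savings as possible and to certify that they can be made at once. I would identify the local configurations in $P$ that admit a one-level shortcut---pairs of elements, compatible with $P$'s covers and with the cross-relations of $C_2$, that may be placed on a common level---and match them to the vertices of $G_P$. The decisive claim, to be read off from the construction of $G_P$ in the next section, is that a set of shortcuts can be applied simultaneously while the map remains an order-preserving injection precisely when the corresponding vertices are pairwise non-adjacent in $G_P$; adjacency encodes exactly the obstructions, namely two shortcuts demanding the same level of $C_2$, or forcing a pair of images that the tracks of $C_2$ cannot separate in the required order. Choosing a maximum independent set, of size $\alpha(G_P)$, then yields a tight injection realizing $\alpha(G_P)$ savings, and the bound of the theorem follows.

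I expect the main obstacle to be this simultaneous realizability: each shortcut is an elementary local move, but applying an entire independent set at once is a global statement, since shortcuts placed in different parts of $P$ can still compete for the same levels of $C_2$ or propagate an incompatible choice of track upward through the interleaving relations. Overcoming it amounts to proving that non-adjacency in $G_P$ has been defined to capture every such interference, so that an independent set always assembles into one valid tight injection; this is both the crux of the construction and the reason the clean invariant $\alpha(G_P)$ governs the savings. A final, lighter step is optimality---that no injection can beat $\alpha(G_P)$ savings---which I would obtain from the converse direction, showing that the set of levels shared by any tight injection induces an independent set of $G_P$, so that $\alpha(G_P)$ is exactly the improvement over Theorem~\ref{BurNag}.
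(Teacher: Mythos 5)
There is a genuine gap: your argument proves (or rather, sketches) the wrong statement. Corollary~\ref{maincor} is a bound on $\La(n,P)$, the largest $P$-free family in the Boolean lattice $B_n$, and its content beyond Theorem~\ref{main} is precisely the passage from the local bound $\La(C_2,P)\le |P|+h-\alpha(G_P)-2$ to the global bound $\frac{1}{2}\bigl(|P|+h-\alpha(G_P)-2\bigr)\nchn$. The paper obtains this by the double counting method of Lubell as adapted by Burcsi and Nagy: one averages, over all $n!$ double chains of $B_n$ (one for each full chain), the number of members of a $P$-free family $\F$ that each double chain contains; computing how many double chains contain a fixed $k$-set and optimizing the resulting LYM-type inequality is what produces both the binomial coefficient $\nchn$ and the factor $\tfrac12$. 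Your proposal never mentions $B_n$, the collection of copies of $C_2$ inside it, or any averaging, so nothing in it can generate the quantity $\frac{1}{2}\nchn$; everything you write lives entirely inside a single double chain. Without that counting step the corollary simply does not follow from what you establish.

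A secondary point: even the part you do address --- the bound on $\La(C_2,P)$, which is Theorem~\ref{main} rather than the corollary --- is left as a plan rather than a proof. You correctly identify the crux (that an independent set of ``shortcuts'' can be realized simultaneously by one order-preserving injection) but then state that overcoming it ``amounts to proving'' that non-adjacency in $G_P$ captures all interference, which is exactly the work the paper does in Section~3 by ordering the selected triples level by level, performing the local switches of Subcases~1.1, 1.2, 2.1, 2.2, and treating intersecting triples separately. Also note that the vertices of $G_P$ are incomparable \emph{triples}, not pairs seated on a common level of $C_2$, so your proposed dictionary between savings and vertices would need to be reworked; and the final ``optimality'' step you describe is not needed for an upper bound.
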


The remaining sections of the paper are organized as follows. 
In Section 2, 
we give more connections between our results and the original $\lanp$ problem,  
and all the necessary terms for proving our main theorem. 
The proof of Theorem~\ref{main} and other related results are presented in Section 3. 
We give  a polynomial time algorithm for finding $\alpha(G_P)$, 
a method to construct more posets satisfying Conjecture~\ref{Conj:GriLu}, 
and some concluding remarks in the last section.

\section{Interval chains and the auxiliary graph}

We briefly introduce the work of Burcsi and Nagy~\cite{BurNag}, 
and of Grosz et al.~\cite{GroMetTom}. 
By a {\em full chain} $C$ in $B_n$, we refer to a collection of $n+1$ mutually inclusive subsets of $[n]$. Namely, $\emptyset\subset\{a_1\}\subset\{a_1,a_2\}\subset\cdots\subset [n]$. 
A {\em $k$-interval chain\/} $C_k$, named by Grosz et al., is the union of a full chain $\emptyset \subset
\{i_1\} \subset \{i_1,i_2\} \subset \cdots \subset [n]$
and the collection of sets $S$ satisfying
$\{i_1,\ldots,i_{m}\}\subset S\subset \{i_1,\ldots,i_{m+k}\}$ for $0\le m\le n-k$.
When $k=1$, a $k$-interval chain is merely a full chain $C$. 
For $k=2$, $C_2$ is just the double chain of Burcsi and Nagy~\cite{BurNag}. 
Observe that $C_k$ is a subposet of $C_{k+1}$ for each $k$, and $C_{n}$ is the Boolean lattice $B_n$. 
Thus, we have 
\begin{equation}\label{eq:IC}
\La(C_1,P)\le\La(C_2,P)\le \cdots \le \La(C_n,P)=\lanp. 
\end{equation}
Although the rightmost term in~(\ref{eq:IC}) is widely open for general posets, the leftmost term is 
always equal to $|P|-1$ for $n\ge |P|-1$. We are interested in determining other terms $\La(C_k,P)$. 
A technical proof of the next proposition on the upper bound for $\La(C_k,P)$ was given in~\cite{GroMetTom}. 
The special case $k=2$ was established earlier in~\cite{BurNag}. 

\begin{figure}[ht]
\begin{center}
\begin{picture}(100,90)
\put(20,0){\circle*{4}}
\put(20,20){\circle*{4}}
\put(20,40){\circle*{4}}
\put(20,60){\circle*{4}}
\put(20,80){\circle*{4}}
\put(20,0){\line(0,1){80}}
\put(20,0){\line(1,1){20}}
\put(20,20){\line(1,1){20}}
\put(20,40){\line(1,1){20}}
\put(20,60){\line(1,1){20}}
\put(40,20){\line(-1,1){20}}
\put(40,40){\line(-1,1){20}}
\put(40,60){\line(-1,1){20}}
\put(40,20){\circle*{4}$r_1$}
\put(40,40){\circle*{4}$r_2$}
\put(40,60){\circle*{4}$r_3$}
\put(40,80){\circle*{4}$r_4$}
\put(0,0){$\ell_0$}
\put(0,20){$\ell_1$}
\put(0,40){$\ell_2$}
\put(0,60){$\ell_3$}
\put(0,80){$\ell_4$}
\put(30,80){$\vdots$}
\end{picture}
\begin{picture}(100,90)
\put(20,0){\circle*{4}}
\put(20,20){\circle*{4}}
\put(20,40){\circle*{4}}
\put(20,60){\circle*{4}}
\put(20,80){\circle*{4}}
\put(20,0){\line(0,1){80}}
\put(20,0){\line(1,1){20}}
\put(20,0){\line(3,1){60}}
\put(20,20){\line(1,1){20}}
\put(40,20){\line(1,1){20}}
\put(20,20){\line(3,1){60}}
\put(20,40){\line(1,1){20}}
\put(40,40){\line(1,1){20}}
\put(20,40){\line(3,1){60}}
\put(20,60){\line(1,1){20}}
\put(40,60){\line(1,1){20}}
\put(20,60){\line(3,1){60}}
\put(40,20){\line(-1,1){20}}
\put(80,20){\line(-1,1){20}}
\put(80,20){\line(-2,1){40}}
\put(40,40){\line(-1,1){20}}
\put(60,40){\line(-2,1){40}}
\put(80,40){\line(-1,1){20}}
\put(80,40){\line(-2,1){40}}
\put(40,60){\line(-1,1){20}}
\put(60,60){\line(-2,1){40}}
\put(80,60){\line(-1,1){20}}
\put(80,60){\line(-2,1){40}}
\put(40,20){\circle*{4}}
\put(40,40){\circle*{4}}
\put(40,60){\circle*{4}}
\put(40,80){\circle*{4}}
\put(60,40){\circle*{4}}
\put(60,60){\circle*{4}}
\put(60,80){\circle*{4}}
\put(80,20){\circle*{4}}
\put(80,40){\circle*{4}}
\put(80,60){\circle*{4}}
\put(80,80){\circle*{4}}
\put(50,80){$\vdots$}
\end{picture}
\end{center}
\caption{A double chain and a 3-interval chain.}
\end{figure}
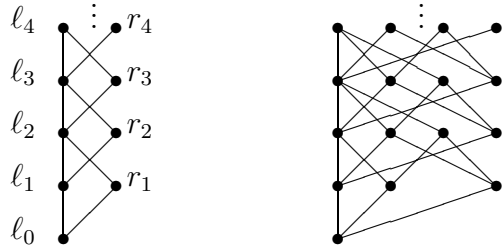

\begin{proposition}{\rm\cite{GroMetTom}}~\label{prop:GroMetTom}
Let $k\ge 2$. For any poset $P$ of size $|P|$ and height $h$, we have 
\[\La(C_k,P)\le |P|+(h-1)(3k-5)2^{k-2}-1.\]
\end{proposition}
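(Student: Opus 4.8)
The plan is to bound the size of an arbitrary $P$-free subposet $\F\subseteq C_k$ directly, exploiting the thinness of the interval chain. First I would fix a convenient parametrization: write the spine as $A_0\subset A_1\subset\cdots\subset A_n$ with $|A_m|=m$, and record that every other element lies in one of the overlapping copies of $B_k$ carried by the intervals $[A_m,A_{m+k}]$, so that each rank (the sets of a fixed cardinality) meets $C_k$ in exactly $2^{k-1}$ elements. To each $S\in C_k$ I attach its \emph{spine index} $m$, the largest $m$ with $\{i_1,\ldots,i_m\}\subseteq S$; then $r-k+1\le m\le r$ where $r=|S|$. The structural fact I would isolate is a comparability-at-distance principle: if $S,S'$ have spine indices $m,m'$ with $m'\ge m+k$, then $S\subseteq A_{m+k}\subseteq A_{m'}\subseteq S'$, so $S\le S'$ automatically. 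Since every poset on $t$ elements embeds into the chain $P_t$, a chain of length $|P|$ inside $\F$ would already contain $P$; hence $\F$ has no chain of length $|P|$.

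These two facts give the qualitative heart of the matter, and with care the finiteness of $\La(C_k,P)$ independent of $n$: by the distance principle any two elements of $\F$ whose ranks differ by at least $2k-1$ are comparable, so if $\F$ met too many well-separated ranks it would contain a chain of length $|P|$. The target bound, however, is additive in $|P|$ and linear in $h-1$, so the crude ``(elements per occupied rank) $\times$ (number of occupied ranks)'' estimate must be upgraded to a height-sensitive embedding. Here I would fix a maximum chain $x_1<_P\cdots<_P x_h$ of $P$ and attempt, by contradiction, to build an order-preserving injection $P\hookrightarrow\F$: route this backbone along a chain of $C_k$, and then insert the remaining $|P|-h$ elements into the local $B_k$-windows straddling the backbone, using the $2^{k-1}$-fold width of each rank and the within-rank incomparabilities available in $C_k$ to realize the antichains of $P$.

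The accounting should then show, reading $C_k$ upward one backbone step at a time, that each of the $h-1$ steps can absorb at most $(3k-5)2^{k-2}$ elements of $\F$ beyond those forced onto the backbone before a copy of $P$ is completed, while the backbone and the ``generic'' elements contribute the remaining $|P|-1$; summing these gives $|P|+(h-1)(3k-5)2^{k-2}-1$. The main obstacle is exactly this local extremal count: one must analyze, in the union of two consecutive overlapping copies of $B_k$ (sharing a $B_{k-1}$), precisely how many elements can be retained while still blocking the insertion step of the embedding, and prove this number is $(3k-5)2^{k-2}$ rather than the naive width $2^{k-1}$. Controlling the overlaps between successive windows and verifying that the incomparabilities of $P$ (not merely its height) can always be matched inside them is the delicate part; it is where the factor is extracted, consistently with the suggestive identity $(3k-5)2^{k-2}=3\,|E(B_{k-1})|-|V(B_{k-1})|$, which hints that the charging counts roughly three units per cover relation between consecutive windows, corrected by the window width.
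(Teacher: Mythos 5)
Your structural observations about $C_k$ are correct: each middle rank of $C_k$ does contain exactly $2^{k-1}$ sets, the spine-index comparability principle holds, and a $P$-free family cannot contain a chain of $|P|$ elements. But the entire quantitative content of the proposition is the constant $(3k-5)2^{k-2}$, and your proposal leaves precisely that step unproved: you acknowledge that ``the main obstacle is exactly this local extremal count'' and offer only the identity $(3k-5)2^{k-2}=3|E(B_{k-1})|-|V(B_{k-1})|$ as circumstantial evidence. An arithmetic coincidence is not an argument; nothing in the proposal shows that each of the $h-1$ transitions can absorb at most $(3k-5)2^{k-2}$ elements before an embedding of $P$ is forced, nor even that some finite per-step constant suffices with your particular embedding scheme. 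As written, this is a plan with the decisive lemma missing, so it does not constitute a proof.

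Two further points of comparison. First, the paper itself does not prove the general-$k$ statement; it cites Gr\'osz, Methuku, and Tompkins for it and only sketches the case $k=2$ (where the constant is $1$). Second, that sketch --- and the general argument it models --- does not route a maximum chain of $P$ as a backbone and then insert the remaining $|P|-h$ elements around it. Instead it applies Mirsky's theorem to partition $P$ into $h$ antichains $A_1,\ldots,A_h$ with all relations pointing upward between them, fixes a suitable linear ordering of the elements of $C_k$, greedily matches $A_1,A_2,\ldots$ to consecutive blocks of $F$, and bounds the number of elements of $F$ that must be skipped at each of the $h-1$ transitions between consecutive antichains. This antichain-by-antichain matching is what makes the per-step accounting tractable, because within a block one only needs to realize an antichain and the only constraints to preserve are those crossing one level boundary at a time. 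Your backbone approach forces you to commit the positions of $x_1<_P\cdots<_P x_h$ before knowing where the off-backbone elements (which carry their own mutual relations) can go, and it is not clear it can be completed; if you want to salvage the proposal, replacing the backbone by the Mirsky decomposition and then carrying out the local count in the union of two overlapping $B_k$-windows is the route to take.
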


To make the paper self-contained, we sketch the proof of Burcsi and Nagy for the case $k=2$ of Proposition~\ref{prop:GroMetTom}.
Let us denote the elements of the double chain $C_2$ by $\ell_0,\ell_1,\ldots, \ell_n$, and $r_1,r_2,\ldots,r_{n-1}$, 
where $\ell_0=\varnothing$, $\ell_{k}=\{i_1,i_2,\ldots,i_k\}$ for $1\le k\le n$, and $r_k=\ell_{k-1}\cup(\ell_{k+1}\setminus \ell_{k})$  for $1\le k\le n-1$. 
Arrange all elements in $C_2$ in the order:
\[\ell_0, \ell_1,r_1,\ell_2, r_2,\ldots, \ell_{n-1},r_{n-1},\ell_n.\]
Let $P$ be given. Consider a subset $F\subset C_2$ containing at least $|P|+h-1$ elements. 
Let us construct an order-preserving injection $f$ from $P$ to $F$. 
First apply Mirsky's theorem~\cite{Mir} to partition $P$ into $h$ antichains $A_1,\ldots, A_{h}$ 
so that for $a\in A_i$ and $a'\in A_j$, if $a\le a'$, then $i\le j$. 
For all elements in $A_1$, arbitrarily match them to the first $|A_1|$ elements in $F$. 
If we arbitrarily match elements in $A_2$ to the next $|A_2|$ elements in $F$, 
it may not hold all possible partial order relations among the elements of $A_1$ and $A_2$. 
When the $|A_1|$-th element in $F$ is $r_{i-1}$ or $\ell_i$, and $r_i\in F$, 
we avoid matching any element in $A_2$ to $r_i$ so as to preserve all possible partial order relations.
Hence we skip an element in $F$ when matching the elements in $A_1$ and $A_2$ to $F$.
Since $P$ is decomposed into $h$ antichains, 
we need to skip $h-1$ elements in $F$ when constructing the injection. 
Therefore, such an injection $f$ from $P$ to $F$ exists if $|F|\ge |P|+h-1$.
Consequently, a $P$-free subset in $C_2$ has size at most $|P|+h-2$.

Our aim is to reduce the number of skipped elements in the process of constructing the injection.
To achieve the goal, we will properly arrange the elements of $P$.
Such arrangements can be obtained by means of an auxiliary graph $G_P$ of the poset $P$.
Let $P$ be a graded poset with levels $L_1,L_2,\ldots,L_{h}$. 
We define the auxiliary graph $G_P$ with the vertex set consisting of the triples of mutually incomparable elements $\{x,y,z\}\subseteq P$ of two types:
A vertex of $G_P$ is {\em $V$-type} if $\{x,y,z\}$ satisfies $x\in L_i$ and  $y,z\in L_{i+1}$, and is {\em $\Lambda$-type} if $x,y\in L_i$ and $z\in L_{i+1}$.  Two vertices $v=\{x,y,z\}$ and $ v'=\{x',y',z'\}$
are adjacent in $G_P$ if and only if $v,v'\subset (L_i\cup L_{i+1})$ for some $i$, or  
$v\subset (L_{i-1}\cup L_i)$ and $v'\subset (L_i\cup L_{i+1})$ with one of the following conditions holding:
\begin{description}
\item{1.} $|L_i|\ge 5$ and $v\cap v'\neq \varnothing$.
\item{2.} $|L_i|=3$ or $4$, $v\cap v'\neq \varnothing$, 
except $v$ is $V$-type, $v'$ is $\Lambda$-type, and $|v\cap v'|=1$. 
\item{3.} $|L_i|=3$ or $4$, $v\cap v'=\varnothing$, $|v\cap L_i|+|v'\cap L_i|=|L_i|$. 
\item{4.} $|L_i|=2$.
 \end{description}

In Figure~\ref{Aux}, we present an example of $G_p$ of a graded poset $P$.
For this poset,  we have $\alpha(G_P)=3$.
Hence, we have $\La(C_2,P)\le |P|+(h-2)-\alpha(G_P)=16$ by Theorem~\ref{main}.

\begin{figure}[ht]
\begin{multicols}{3}

\begin{center}
\begin{picture}(100,180)
\put(0,160){$P$}
\put(15,20){\circle*{4}$z_1$}
\put(75,20){\circle*{4}$z_2$}
\put(0,50){\circle*{4}$y_1$}
\put(15,50){\circle*{4}$y_2$}
\put(35,50){\circle*{4}$y_3$}
\put(60,50){\circle*{4}$y_4$}
\put(90,50){\circle*{4}$y_5$}
\put(15,80){\circle*{4}$x_1$}
\put(75,80){\circle*{4}$x_2$}
\put(0,110){\circle*{4}$s_1$}
\put(30,110){\circle*{4}$s_2$}
\put(60,110){\circle*{4}$s_3$}
\put(90,110){\circle*{4}$s_4$}
\put(15,140){\circle*{4}$r_1$}
\put(45,140){\circle*{4}$r_2$}
\put(75,140){\circle*{4}$r_3$}
\put(0,50){\line(1,2){45}}  
\put(0,110){\line(1,2){15}}  
\put(15,20){\line(2,3){20}}  
\put(15,80){\line(2,-3){20}}  
\put(60,50){\line(1,2){30}}  
\put(60,110){\line(1,2){15}}  
\put(75,20){\line(1,2){15}}  
\put(0,50){\line(1,-2){15}}  
\put(0,110){\line(1,-2){15}}  
\put(15,140){\line(1,-2){15}}  
\put(45,140){\line(1,-2){45}}  
\put(60,50){\line(1,-2){15}}  
\put(75,140){\line(1,-2){15}}  
\put(15,20){\line(0,1){60}}  
\put(35,50){\line(4,-3){40}}  
\end{picture}
\end{center}

\columnbreak

\begin{center}
\begin{tabular}{|c|c|c|}
\hline vertex & triple & type  \\
\hline $t_1$ & $\{r_1,s_3,s_4\}$ & $\Lambda$\\
\hline $t_2$ & $\{r_2,s_1,s_4\}$ & $\Lambda$\\
\hline $t_3$ & $\{r_3,s_1,s_2\}$ & $\Lambda$ \\
\hline $t_4$ & $\{r_2,r_3,s_1\}$ & $V$ \\
\hline $t_5$ & $\{r_2,r_3,s_4\}$ & $V$ \\
\hline $u_1$ & $\{s_1,s_2,x_2\}$ & $V$\\
\hline $u_2$ & $\{s_3,s_4,x_1\}$ & $V$\\
\hline $v_1$ & $\{x_1,y_4,y_5\}$ & $\Lambda$\\
\hline $v_2$ & $\{x_2,y_1,y_2\}$ & $\Lambda$ \\
\hline $v_3$ & $\{x_2,y_1,y_3\}$ & $\Lambda$ \\
\hline $v_4$ & $\{x_2,y_2,y_3\}$ & $\Lambda$ \\ 
\hline $w_1$ & $\{y_1,y_2,z_2\}$ & $V$ \\ 
\hline $w_2$ & $\{y_4,y_5,z_1\}$ & $V$ \\ 
\hline   
\end{tabular}
\end{center}

\columnbreak

\begin{center}
\begin{picture}(120,190)
\put(0,170){$G_P$}
\put(30,15){\circle*{4}}
\put(90,15){\circle{6}}
\put(90,15){\circle*{4}}
\put(30,15){\line(1,0){60}}
\put(15,75){\circle*{4}}
\put(45,45){\circle*{4}}
\put(45,105){\circle*{4}}
\put(75,45){\circle*{4}}
\put(75,45){\circle{6}}
\put(75,105){\circle*{4}}
\put(105,75){\circle*{4}}
\put(15,75){\line(1,0){90}}
\put(45,45){\line(1,0){30}}
\put(45,105){\line(1,0){30}}
\put(15,75){\line(1,1){30}}
\put(15,75){\line(1,-1){30}}
\put(105,75){\line(-1,1){30}}
\put(105,75){\line(-1,-1){30}}
\put(15,75){\line(2,1){60}}
\put(15,75){\line(2,-1){60}}
\put(105,75){\line(-2,-1){60}}
\put(105,75){\line(-2,1){60}}
\put(45,45){\line(0,1){60}}
\put(75,45){\line(0,1){60}}
\put(45,45){\line(1,2){30}}
\put(75,45){\line(-1,2){30}}
\put(45,120){\circle*{4}}
\put(75,120){\circle*{4}}
\put(75,120){\circle{6}}
\put(30,150){\circle*{4}}
\put(90,150){\circle*{4}}
\put(60,165){\circle*{4}}
\put(45,120){\line(1,0){30}}
\put(30,150){\line(1,0){60}}
\put(45,120){\line(1,3){15}}
\put(75,120){\line(-1,3){15}}
\put(30,150){\line(2,1){30}}
\put(30,150){\line(3,-2){45}}
\put(90,150){\line(-3,-2){45}}
\put(90,150){\line(-2,1){30}}
\put(30,150){\line(1,-2){15}}
\put(90,150){\line(-1,-2){15}}
\put(30,120){$t_4$}
\put(80,120){$t_5$}
\put(15,150){$t_1$}
\put(95,150){$t_3$}
\put(65,165){$t_2$}
\put(0,75){$v_1$}
\put(30,45){$v_3$}
\put(25,100){$u_1$}
\put(80,45){$v_4$}
\put(85,100){$u_2$}
\put(110,75){$v_2$}
\put(15,15){$w_1$}
\put(95,15){$w_2$}
\put(45,120){\line(0,-1){15}}
\put(75,120){\line(0,-1){15}}
\put(30,150){\line(1,-1){45}}
\put(90,150){\line(-1,-1){45}}
\qbezier(30,150)(20,110)(45,105)
\qbezier(90,150)(100,110)(75,105)
\put(30,15){\line(1,2){30}}
\qbezier(15,75)(30,15)(90,15)
\qbezier(105,75)(90,15)(30,15)
\end{picture}
\end{center}

\end{multicols}
\caption{A graded poset and its auxilary graph. }\label{Aux}
\end{figure}
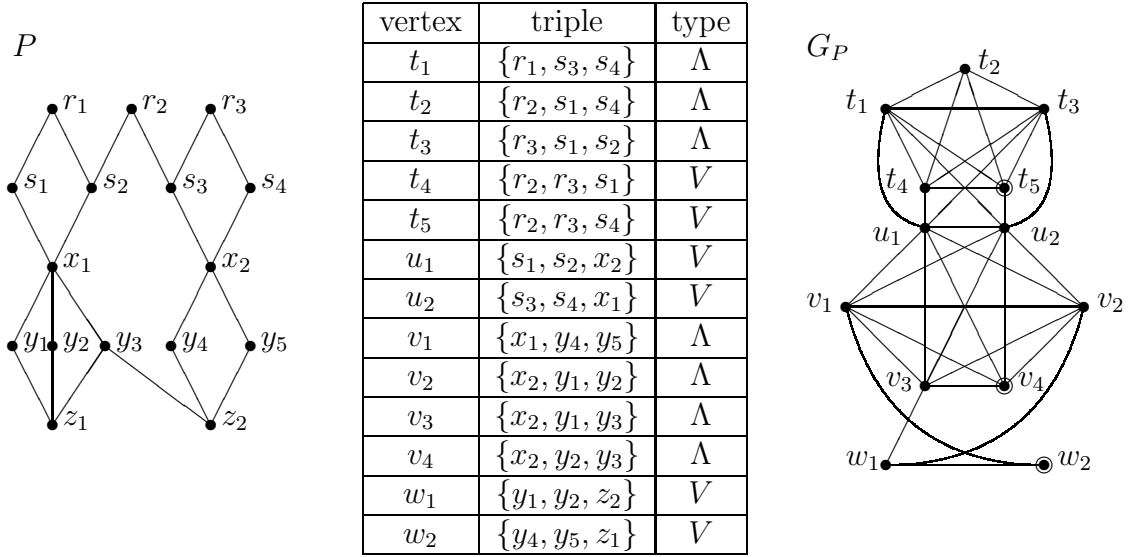

\section{Main Results}

In this section, we present our work. 
The first one is Theorem~\ref{main}. 
We will choose some specific triples of incomparable elements in a graded poset, 
and arrange them in a suitable way to avoid skipping elements when the construction goes up from one level to the next level. One triple is chosen, one element is saved.
Given any independent set of $G_P$, 
we choose the triples corresponding to the vertices of the independent set. 
Thus, the statement of Theorem~\ref{main} follows. Now we show the proof.

\subsection{Proof of Theorem~\ref{main}}  

Given a graded poset $P$ with height $h$ and levels $L_1,\ldots,L_{h}$, 
pick a subset $F$ of $C_2$ with size $|F|=|P|+h-\alpha(G_P)-1$.  
Let us construct an order-preserving injection $f$ from $P$ to $F$. 
In the sequel, we always arrange the elements in $F$ consistent with the order: 
$\ell_0, \ell_1,r_1,\ell_2, r_2,\ldots, \ell_{n-1},r_{n-1},\ell_n$.
Let $k=\alpha(G_P)$, and $I=\{v_1,v_2,\ldots, v_k\}$ be a maximum independent set of $G_P$. 
Because in the union of two consecutive levels $L_i\cup L_{i+1}$, 
$I$ contains at most one vertex corresponding to a triple in $L_i\cup L_{i+1}$, 
we may assume $v_1\subset (L_{n_1}\cup L_{n_1+1}), v_2\subset (L_{n_2}\cup L_{n_2+1}),\ldots, 
v_k\subset (L_{n_k}\cup L_{n_k+1})$ with $n_1<n_2<\cdots <n_k$.
To construct the desired injection $f$ from $P$ to $F$, we match elements in 
$L_1,  L_2,\ldots, L_h$,  to elements in $F$ consecutively.
Our basic rules are the following: 
For elements in the same level but not in any $v_j$, we match them consecutively to $F$. 
For elements in some $v_j$, 
we will last match the elements in $v_j\cap L_{n_j}$ to $F$ after all other elements in $L_{n_j}$ are matched, while the elements in $v_j\cap L_{n_j+1}$ will be first matched before all other elements in $L_{n_j+1}$ can be matched.
Now let us see how to avoid skipping elements when matching elements in $L_{n_j}\cup L_{n_j+1}$ to $F$. 

\noindent{\bf Case 1.} $v_j\cap v_{j+1}=\varnothing$.

Recall that there are two types of vertices in $G_P$.

\noindent{\bf Subcase 1.1} The vertex $v_j$ is of $\Lambda$-type. 

Suppose $v_j=\{x,y,z\}$ and $x,y\in L_{n_j}$. 
We match the elements $x$, $y$, and $z$ to $F$ consecutively, 
and do modification only when $f(z)=\ell_i$ and also $r_i$ is in $F$ for some $i$.   
The reason is that we might have matched $y$ to $r_{i-1}$  
and will have to match the next element $w$ in $L_{n_j+1}$ to $r_i$, but $w\ge_P y$.  
Thus, the injection fails to preserve the partial order relation between $y$ and $w$. 
If the above situation happens, we exchange $f(z)$ and $f(w)$ so that $f(z)=r_i$ and $f(w)=\ell_i$.
Thus, we do not skip the element $r_i$ and all possible partial order relations are preserved. 

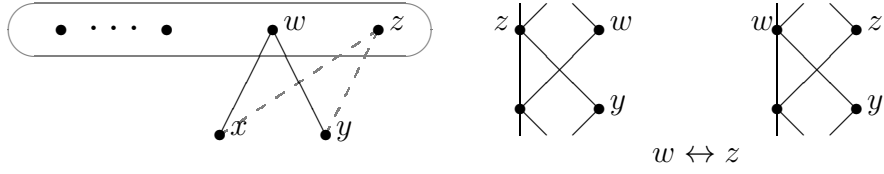
\begin{figure}[ht]
\begin{center}
\begin{picture}(160,70)
\textcolor{gray}{
\put(80,50){\oval(160,20)}
\dashline{4}(80,10)(140,50)
\dashline{4}(140,50)(120,10)
}
\put(20,50){\circle*{4}}	
\put(60,50){\circle*{4}}	
\put(100,50){\circle*{4}$w$}	
\put(140,50){\circle*{4}$z$}	
\put(120,10){\circle*{4}$y$}	
\put(80,10){\circle*{4}$x$}	
\put(80,10){\line(1,2){20}}	
\put(100,50){\line(1,-2){20}}	
\put(30,50){{\Large $\ldots$}}
\end{picture}\qquad 
\begin{picture}(60,70)
\put(10,20){\circle*{4}}	
\put(40,20){\circle*{4}$y$}	
\put(10,50){\circle*{4}}
\put(0,50){$z$}	
\put(40,50){\circle*{4}$w$}	
\put(10,10){\line(0,1){50}}
\put(10,20){\line(1,1){30}}
\put(10,50){\line(1,1){10}}
\put(40,20){\line(-1,-1){10}}
\put(40,50){\line(-1,1){10}}
\put(10,20){\line(1,-1){10}}
\put(10,50){\line(1,-1){30}}
\end{picture}$w\leftrightarrow z$
\begin{picture}(60,70)
\put(10,20){\circle*{4}}	
\put(40,20){\circle*{4}$y$}	
\put(10,50){\circle*{4}}	
\put(0,50){$w$}	
\put(40,50){\circle*{4}$z$}	
\put(10,10){\line(0,1){50}}
\put(10,20){\line(1,1){30}}
\put(10,50){\line(1,1){10}}
\put(40,20){\line(-1,-1){10}}
\put(40,50){\line(-1,1){10}}
\put(10,20){\line(1,-1){10}}
\put(10,50){\line(1,-1){30}}
\end{picture}
\end{center}
\caption{Switch $w$ and $z$. (The dotted lines indicate the incomparable triple.)}
\end{figure}
\noindent{\bf Subcase 1.2.} The vertex $v_j$ is of $V$-type.

Suppose $v_j=\{x,y,z\}$ and $x\in L_{n_j}$. 
As before, we first adopt the basic rules to match the elements $x$, $y$, and $z$ to $F$.
We do modification only when $f(x)=\ell_i$, $f(y)=r_i$, $f(z)=\ell_{i+1}$, and $r_{i+1}$ is in $F$.
It is because we might have matched the previous element $w\in L_{n_j}$ to $r_{i-1}$, but $w\le_P y$.
Then the injection cannot preserve the partial order relation.  
To fix it, we change $f(x)$ to be $r_i$ and $f(y)$ to be $\ell_{i}$.
However, when we next match $z$ to $\ell_{i+1}$ and some element $u$ in $L_{n_j+1}$ to $r_{i+1}$, 
we may encounter the situation $u\ge_P x$. Then the partial order relation cannot be preserved again. 
Once this happens, we exchange $f(z)$ and $f(u)$ so that $f(z)=r_{i+1}$ and $f(u)=\ell_{i+1}$ to preserve the partial order relations. Thus, we do not skip any element. 

\begin{figure}[ht]
\begin{center}
\begin{picture}(160,70)
\textcolor{gray}{
\put(80,50){\oval(160,20)}
\dashline{4}(100,50)(120,10)
\dashline{4}(140,50)(120,10)
}
 \put(20,50){\circle*{4}}	
\put(60,50){\circle*{4}$u$}	
\put(100,50){\circle*{4}$z$}	
\put(140,50){\circle*{4}$y$}	
\put(120,10){\circle*{4}$x$}	
\put(80,10){\circle*{4}$w$}	
\put(80,10){\line(1,2){20}}	
\put(80,10){\line(3,2){60}}	
\put(60,50){\line(1,-2){20}}	
\put(60,50){\line(3,-2){60}}	
\put(30,50){{\Large $\ldots$}}
\end{picture} 
\qquad
\begin{picture}(60,70)
\put(10,20){\circle*{4}}	
\put(40,20){\circle*{4}$w$}	
\put(10,50){\circle*{4}}	
\put(40,50){\circle*{4}}	
\put(0,35){$x$}	
\put(10,35){\circle*{4}}	
\put(40,35){\circle*{4}$y$}	
\put(10,10){\line(0,1){50}}
\put(10,20){\line(2,1){30}}
\put(10,50){\line(1,1){10}}
\put(40,20){\line(-1,-1){10}}
\put(40,50){\line(-1,1){10}}
\put(10,20){\line(1,-1){10}}
\put(10,50){\line(2,-1){30}}
\put(10,20){\line(2,1){30}}
\put(10,35){\line(2,-1){30}}
\put(10,35){\line(2,1){30}}
\end{picture} $x\leftrightarrow y$ 
\begin{picture}(60,70)
\put(10,20){\circle*{4}}	
\put(40,20){\circle*{4}$w$}	
\put(10,50){\circle*{4}}	
\put(40,50){\circle*{4}$u$}	
\put(10,35){\circle*{4}}	
\put(0,35){$y$}	
\put(0,50){$z$}	
\put(40,35){\circle*{4}$x$}	
\put(10,10){\line(0,1){50}}
\put(10,20){\line(2,1){30}}
\put(10,50){\line(1,1){10}}
\put(40,20){\line(-1,-1){10}}
\put(40,50){\line(-1,1){10}}
\put(10,20){\line(1,-1){10}}
\put(10,50){\line(2,-1){30}}
\put(10,20){\line(2,1){30}}
\put(10,35){\line(2,-1){30}}
\put(10,35){\line(2,1){30}}
\end{picture}$u\leftrightarrow z$ 
\begin{picture}(60,70)
\put(10,20){\circle*{4}}	
\put(40,20){\circle*{4}$w$}	
\put(40,50){\circle*{4}$z$}	
\put(10,35){\circle*{4}}	
\put(40,35){\circle*{4}$x$}	
\put(10,50){\circle*{4}}	
\put(0,35){$y$}	
\put(0,50){$u$}	
\put(10,10){\line(0,1){50}}
\put(10,20){\line(2,1){30}}
\put(10,50){\line(1,1){10}}
\put(40,20){\line(-1,-1){10}}
\put(40,50){\line(-1,1){10}}
\put(10,20){\line(1,-1){10}}
\put(10,50){\line(2,-1){30}}
\put(10,20){\line(2,1){30}}
\put(10,35){\line(2,-1){30}}
\put(10,35){\line(2,1){30}}
\end{picture}
\end{center}
\caption{Switch $x$ and $y$, and also $u$ and $z$.}
\end{figure}
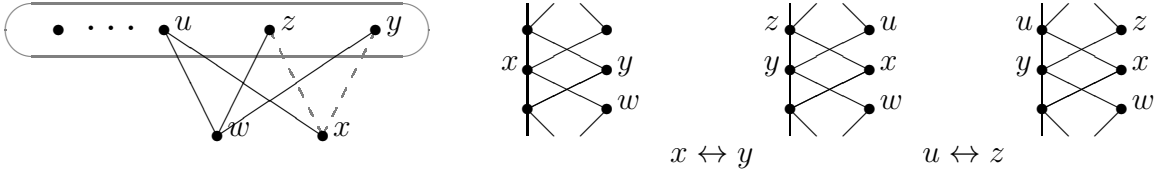

\noindent{\bf Case 2.}  $v_j\cap v_{j+1}\neq\varnothing$.

In this case, $v_j\subset (L_{n_j}\cup L_{n_j+1})$, $v_{j+1}\subset (L_{n_{j+1}}\cup L_{n_{j+1}+1})$, 
and $n_j+1=n_{j+1}$. 
By the adjacency relation of vertices in $G_P$, 
$v_j$ is of $V$-type, $v_{j+1}$ is of $\Lambda$-type, and $|L_{n_j+1}|=3$ or $4$.
Suppose that $v_j=\{x,y,z\}$, $v_{j+1}=\{x',y',z'\}$, and $z=x'$ is the common element in the two vertices.

\noindent{\bf Subcase 2.1.} $|L_{n_j+1}|=3$.

We match the elements in the order $x$, $y$, $z$, $y'$, and $z'$ to $F$.
If following the above order leads to $f(x)=\ell_i$ and $f(y)=r_i$, then we switch them so that $f(x)=r_i$ and $f(y)=\ell_i$. Thus, for any element $s\in L_{n_j}-\{x\}$, we have $f(y)\ge f(s)$. This switch preserves all possible relations between $y$ and the elements in $L_{n_j}-\{x\}$. 
For the next two elements $x'$ and $y'$, 
if we match them to $\ell_{i+1}$ and $r_{i+1}$, respectively, 
then we may loss the possible partial order relation between $x$ and $y'$ like the subcase 1.2.
Should this happen, we use the same way to solve it. 
Namely, switch $f(x')$ and $f(y')$ to get $f(x')=r_{i+1}$ and $f(y')=\ell_{i+1}$.  
Moreover, if $f(z')=\ell_{i'}$ and there is an element $w$ in the same level, which has to be matched to $r_{i'}$, then we switch so that $f(z')=r_{i'}$ and $f(w)=\ell_{i'}$ to preserve the possible relations between $w$ and $x'$ and $y'$
as  in Subcase 1.1. Therefore, we do not skip any element in $F$ when constructing the injection for the five elements.

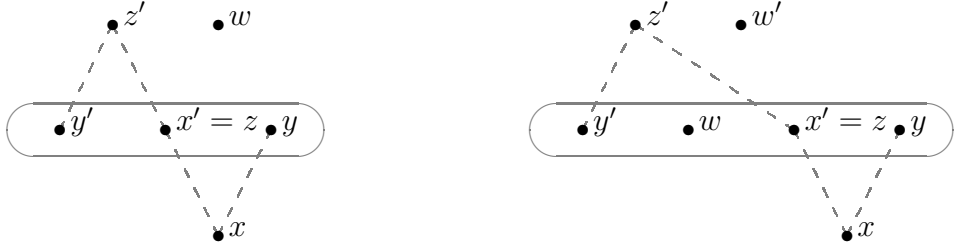
\begin{figure}[ht]
\begin{center}
\begin{picture}(140,100)
\textcolor{gray}{
\put(60,50){\oval(120,20)}
\dashline{4}(40,90)(20,50)
\dashline{4}(40,90)(60,50)
\dashline{4}(80,10)(100,50)
\dashline{4}(80,10)(60,50)}
\put(40,90){\circle*{4}$z'$}	
\put(80,90){\circle*{4}$w$}	
\put(20,50){\circle*{4}$y'$}	
\put(60,50){\circle*{4}$x'=z$}	
\put(100,50){\circle*{4}$y$}	
\put(80,10){\circle*{4}$x$}	
\end{picture}	
\hspace{50pt}
\begin{picture}(160,100)
\textcolor{gray}{
\put(80,50){\oval(160,20)}
\dashline{4}(40,90)(100,50)
\dashline{4}(40,90)(20,50)
\dashline{4}(100,50)(120,10)
\dashline{4}(140,50)(120,10)}
\put(40,90){\circle*{4}$z'$}	
\put(80,90){\circle*{4}$w'$}	
\put(20,50){\circle*{4}$y'$}	
\put(60,50){\circle*{4}$w$}	
\put(100,50){\circle*{4}$x'=z$}	
\put(140,50){\circle*{4}$y$}	
\put(120,10){\circle*{4}$x$}	
\end{picture}	
\end{center}
\caption{Intersecting triples but considered as nonadjacent vertices.}
\end{figure}

\noindent{\bf Subcase 2.2.} $|L_{n_j+1}|=4$.

Let $w$ be the element in $L_{n_j+1}$ but not in any triples.
We pairwisely match the elements in the order $x$ and $y$, $w$ and $z$, and $y'$ and $z'$ to $F$.
For $x$ and $y$,
if following the above order leads to $f(x)=\ell_i$ and $f(y)=r_i$, 
then we switch them so that $f(x)=r_i$ and $f(y)=\ell_i$; otherwise, we do not change.
The switch above preserves all possible relations between $y$ and the elements in $L_{n_j}-\{x\}$. 
Then for $z$ and $w$, if we have switched $x$ and $y$ in the previous step, and have to match 
$z$ and $w$ to $\ell_{i+1}$ and $r_{i+1}$, then we switch them so that $f(z)=r_{i+1}$ and $f(w)=\ell_{i+1}$;  
otherwise, we do not change.
The switch preserves the possible relation between $x$ and $w$. 
Finally, for $y'$ and $z'$, we first simply match $y'$ to the next element in $F$.
For $z'$, we will take the next element, say $w'$, in the same level as $z'$ together into consideration.
If we have to match $z'$ and $w'$ to some $\ell_{i'}$ and $r_{i'}$, respectively, 
then we switch them so that $f(z')=r_{i'}$ and $f(w')=\ell_{i'}$  as in the subcase 1.1. 
This switch preserves the possible relation between $w'$ and elements in $L_{n_j+1}$.

Since we need to skip elements only when the union of the two consecutive levels contains no triple selected in $I$, 
the number of skipped elements in $F$ is ($h-1)-\alpha(G_P)$. 
Thus, there is an order-preserving injection from $P$ to $F$. 
Consequently, $\La(D,P)\le |P|+h-\alpha(G_P)-2$.

\subsection{The adjacent vertices in $G_P$}  

In this section, 
we explain why we can only choose the triples corresponding to the vertices in an independent of $G_P$. 
Suppose we choose two triples corresponding a pair adjacent vertices in $G_P$. 
First, we may assume the incomparble triples of elements are all in $L_j\cup L_{j+1}$ for some $j$.
When using the method of Burcsi and Nagy to construct the injection, 
we skip only one element when the construction goes up from $L_j$ to $ L_{j+1}$. 
Therefore, it is impossible to save two elements no matter how we arrange them. 
So, we can only choose an incomparble triple of elements in $L_j\cup L_{j+1}$.
Now we assume that one of the triples is in $L_{j-1}\cup L_j$ and the other is in $L_j\cup L_{j+1}$ for some $j$.
We give examples showing that reducing two skipped elements is impossible under some circumstances. 
Consider the following posets $P_1,\ldots,P_7$ whose Hasse diagrams are presented in Figure~\ref{EDGES}:
\begin{description}
\item{$P_1$:} $\{x_1,x_2,y_1,\ldots y_n,z_1,z_2\}$ and $n\ge 3$; 
$x_1\le y_k\le z_1$ for $k\ge 3$, and $x_2\le y_k\le z_2$ for $k\ge 1$.
\item{$P_2$:} $\{x_1,x_2,y_1,\ldots y_n,z_1,z_2\}$ and $n\ge 5$;
$x_1\le y_k\le z_2$ for $k\ge 1$, $x_2\le y_k$ for $k\neq 2,3$, $y_k\le z_1$ for $k\neq 1,2$.
\item{$P_3$:} $\{x_1,x_2,x_3,y_1,\ldots y_n,z_1,z_2,z_3\}$ and $n\ge 2$;
$x_3\le y_1\le z_3$  and $x_m\le y_k\le z_m$ for all $m$ and $k\ge 2$.
\item{$P_4$:} $\{x_1,x_2,y_1,\ldots y_n,z_1,z_2,z_3\}$ and $n\ge 3$,
$x_1\le y_k$ for $k\ge 3$, $x_2\le y_3$ for all $k$, $ y_k\le z_1$ for all $k\ge 3$, and $ y_k\le z_2$ for all $k\ge 3$.
\item{$P_5$:} $\{x_1,x_2,y_1,y_2,y_3, y_4,z_1,z_2\}$;
$x_1\le y_k$ for all $k$, $x_2\le y_k$ for $k=1,2$, $y_k\le z_1$ for $k=3,4$, and $y_k\le z_2$ for all $k$.
\item{$P_6$:} $\{x_1,x_2,y_1,y_2, y_3,z_1,z_2,z_3\}$;
$x_1\le y_k$ for all $k$, $x_2\le y_1$, $y_k\le z_1$ for $k=2,3$, $y_k\le z_2$ for $k=2,3$,
 and $y_k\le z_3$ for all $k$.
\item{$P_7$:} $\{x_1,x_2,x_3,y_1, y_2,z_1,z_2,z_3\}$;
$x_k\le y_1\le z_3$ and $x_1\le y_2\le z_k$ for all $k$.
\end{description}

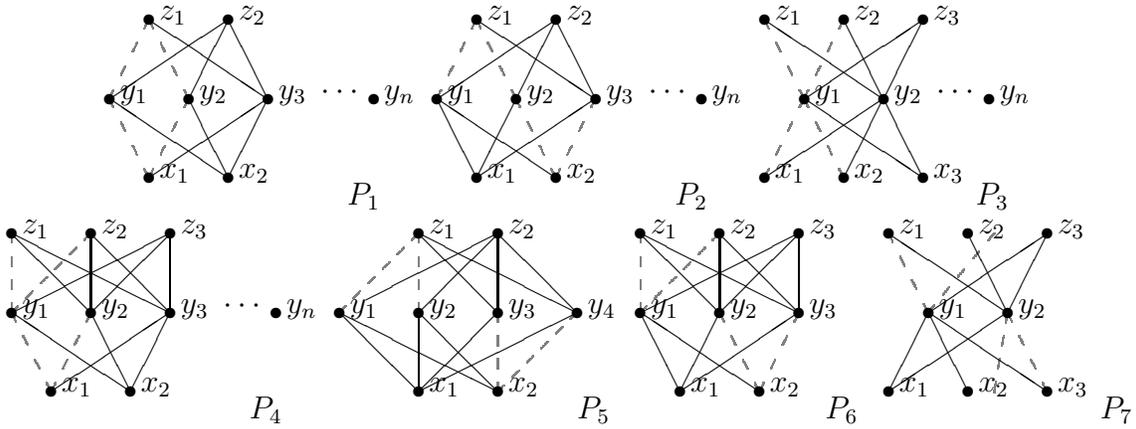
\begin{figure}[ht]
\begin{center}
\begin{picture}(120,80)
\textcolor{gray}{
\dashline{4}(25,70)(10,40)
\dashline{4}(25,70)(40,40)
\dashline{4}(25,10)(10,40)
\dashline{4}(25,10)(40,40)}
\put(25,70){\circle*{4}$z_1$}	
\put(55,70){\circle*{4}$z_2$}	
\put(25,10){\circle*{4}$x_1$}	
\put(90,40){$\cdots$}
\put(110,40){\circle*{4}$y_n$}
\put(10,40){\circle*{4}$y_1$}	
\put(40,40){\circle*{4}$y_2$}	
\put(70,40){\circle*{4}$y_3$}	
\put(55,10){\circle*{4}$x_2$}	
\put(55,70){\line(1,-2){15}}
\put(55,70){\line(-1,-2){15}}
\put(55,70){\line(-3,-2){45}}
\put(25,70){\line(3,-2){45}}
\put(55,10){\line(-3,2){45}}
\put(25,10){\line(3,2){45}}
\put(55,10){\line(1,2){15}}
\put(55,10){\line(-1,2){15}}
\put(100,0){$P_1$}
\end{picture}
\begin{picture}(120,80)
\textcolor{gray}{
\dashline{4}(25,70)(10,40)
\dashline{4}(25,70)(40,40)
\dashline{4}(55,10)(70,40)
\dashline{4}(55,10)(40,40)}
\put(25,70){\circle*{4}$z_1$}	
\put(55,70){\circle*{4}$z_2$}	
\put(25,10){\circle*{4}$x_1$}	
\put(90,40){$\cdots$}
\put(110,40){\circle*{4}$y_n$}
\put(10,40){\circle*{4}$y_1$}	
\put(40,40){\circle*{4}$y_2$}	
\put(70,40){\circle*{4}$y_3$}	
\put(55,10){\circle*{4}$x_2$}	
\put(55,70){\line(1,-2){15}}
\put(55,70){\line(-1,-2){15}}
\put(55,70){\line(-3,-2){45}}
\put(25,70){\line(3,-2){45}}
\put(55,10){\line(-3,2){45}}
\put(25,10){\line(3,2){45}}
\put(25,10){\line(1,2){15}}
\put(25,10){\line(-1,2){15}}
\put(100,0){$P_2$}
\end{picture}
\begin{picture}(110,80)
\textcolor{gray}{
\dashline{4}(25,40)(10,10)
\dashline{4}(25,40)(40,10) 
\dashline{4}(25,40)(10,70)
\dashline{4}(25,40)(40,70)}
\put(10,10){\circle*{4}$x_1$}	
\put(40,10){\circle*{4}$x_2$}	
\put(70,10){\circle*{4}$x_3$}	
\put(25,40){\circle*{4}$y_1$}	
\put(55,40){\circle*{4}$y_2$}	
\put(75,40){$\cdots$}
\put(95,40){\circle*{4}$y_n$}	
\put(10,70){\circle*{4}$z_1$}	
\put(40,70){\circle*{4}$z_2$}	
\put(70,70){\circle*{4}$z_3$}	
\put(55,40){\line(1,-2){15}}
\put(55,40){\line(-1,-2){15}}
\put(55,40){\line(-3,-2){45}}
\put(25,40){\line(3,-2){45}}
\put(55,40){\line(-3,2){45}}
\put(25,40){\line(3,2){45}}
\put(55,40){\line(1,2){15}}
\put(55,40){\line(-1,2){15}}
\put(90,0){$P_3$}
\end{picture}	
\\
\begin{picture}(120,80)
\textcolor{gray}{
\dashline{4}(40,70)(10,40)
\dashline{4}(10,70)(10,40)
\dashline{4}(25,10)(10,40)
\dashline{4}(25,10)(40,40)}
\put(10,70){\circle*{4}$z_1$}	
\put(40,70){\circle*{4}$z_2$}	
\put(70,70){\circle*{4}$z_3$}	
\put(25,10){\circle*{4}$x_1$}	
\put(10,40){\circle*{4}$y_1$}	
\put(40,40){\circle*{4}$y_2$}	
\put(90,40){$\cdots$}
\put(110,40){\circle*{4}$y_n$}
\put(70,40){\circle*{4}$y_3$}	
\put(55,10){\circle*{4}$x_2$}	
\put(55,10){\line(-3,2){45}}
\put(55,10){\line(1,2){15}}
\put(55,10){\line(-1,2){15}}
\put(25,10){\line(3,2){45}}
\put(70,70){\line(0,-1){30}}
\put(70,70){\line(-1,-1){30}}
\put(70,70){\line(-2,-1){60}}
\put(40,70){\line(0,-1){30}}
\put(40,70){\line(1,-1){30}}
\put(10,70){\line(2,-1){60}}
\put(10,70){\line(1,-1){30}}
\put(100,0){$P_4$}
\end{picture}
\begin{picture}(110,80)
\textcolor{gray}{
\dashline{4}(40,70)(40,40)
\dashline{4}(40,70)(10,40)
\dashline{4}(70,40)(70,10)
\dashline{4}(100,40)(70,10)}
\put(40,10){\circle*{4}$x_1$}	
\put(70,10){\circle*{4}$x_2$}	
\put(40,70){\circle*{4}$z_1$}	
\put(70,70){\circle*{4}$z_2$}	
\put(10,40){\circle*{4}$y_1$}	
\put(40,40){\circle*{4}$y_2$}	
\put(70,40){\circle*{4}$y_3$}	
\put(100,40){\circle*{4}$y_4$}	
\put(40,70){\line(1,-1){30}}
\put(40,70){\line(2,-1){60}}
\put(70,70){\line(1,-1){30}}
\put(70,70){\line(-1,-1){30}} 
\put(70,70){\line(0,-1){30}}
\put(70,70){\line(-2,-1){50}}
\put(70,10){\line(-1,1){30}} 
\put(70,10){\line(-2,1){60}}
\put(40,10){\line(1,1){30}}
\put(40,10){\line(-1,1){30}} 
\put(40,10){\line(0,1){30}}
\put(40,10){\line(2,1){60}}
\put(100,0){$P_5$}
\end{picture}
\begin{picture}(90,80)
\textcolor{gray}{
\dashline{4}(40,70)(10,40)
\dashline{4}(10,70)(10,40)
\dashline{4}(55,10)(70,40)
\dashline{4}(55,10)(40,40)}
\put(10,70){\circle*{4}$z_1$}	
\put(40,70){\circle*{4}$z_2$}	
\put(70,70){\circle*{4}$z_3$}	
\put(25,10){\circle*{4}$x_1$}	
\put(10,40){\circle*{4}$y_1$}	
\put(40,40){\circle*{4}$y_2$}	
\put(70,40){\circle*{4}$y_3$}	
\put(55,10){\circle*{4}$x_2$}	
\put(55,10){\line(-3,2){45}}
\put(25,10){\line(1,2){15}}
\put(25,10){\line(-1,2){15}}
\put(25,10){\line(3,2){45}}
\put(70,70){\line(0,-1){30}}
\put(70,70){\line(-1,-1){30}}
\put(70,70){\line(-2,-1){60}}
\put(40,70){\line(0,-1){30}}
\put(40,70){\line(1,-1){30}}
\put(10,70){\line(2,-1){60}}
\put(10,70){\line(1,-1){30}}
\put(80,0){$P_6$}
\end{picture} 
\begin{picture}(100,80)
\textcolor{gray}{
\dashline{4}(55,40)(70,10)
\dashline{4}(55,40)(50,10) 
\dashline{4}(25,40)(10,70)
\dashline{4}(25,40)(50,70)}
\put(10,10){\circle*{4}$x_1$}	
\put(40,10){\circle*{4}$x_2$}	
\put(70,10){\circle*{4}$x_3$}	
\put(25,40){\circle*{4}$y_1$}	
\put(55,40){\circle*{4}$y_2$}	
\put(10,70){\circle*{4}$z_1$}	
\put(40,70){\circle*{4}$z_2$}	
\put(70,70){\circle*{4}$z_3$}	
\put(55,40){\line(-3,-2){45}}
\put(25,40){\line(3,-2){45}}
\put(55,40){\line(-3,2){45}}
\put(25,40){\line(3,2){45}}
\put(55,40){\line(1,2){15}}
\put(55,40){\line(-1,2){15}}
\put(25,40){\line(1,-2){15}}
\put(25,40){\line(-1,-2){15}}
\put(90,0){$P_7$}
\end{picture}	
\end{center}
\caption{Seven types of posets.}\label{EDGES}
\end{figure}

Each poset contains two triples of incomparable elements, 
one in the union of bottom and middle levels, 
and the other in in the union of top and middle levels. 
For every poset $P_i$, the auxiliary graph $G_{P_i}$ is $K_2$.
Indeed, these are all the possible configurations (up to the dual cases) of triples corresponding to the adjacent vertices in $G_P$.
By Theorem~\ref{main}, we have $\La(C_2,P_i)\le |P_i|+h(P_i)-\alpha(G_{P_i})-2=|P_i|$.  
We claim that the upper bound cannot be improved to $|P_i|-1$ by showing the next proposition.

\begin{proposition}
For each of the seven types of posets $P_i$ above, there is a $P_i$-free subset $F$ of $C_2$ with size $|P_i|$. 
\end{proposition}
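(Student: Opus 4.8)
The plan is to exhibit, for each poset $P_i$, an explicit $P_i$-free subset $F\subseteq C_2$ of size exactly $|P_i|$, thereby showing that the bound $\La(C_2,P_i)\le |P_i|$ from Theorem~\ref{main} is tight and cannot be lowered to $|P_i|-1$. Since each $P_i$ has height $h=3$ and two distinguished incomparable triples (one spanning $L_1\cup L_2$, one spanning $L_2\cup L_3$, sharing at least one middle element), the natural candidate for $F$ is a ``three-band'' slice of the double chain: a contiguous run of elements $\ell_a,r_a,\ell_{a+1},r_{a+1},\ldots$ in the prescribed order $\ell_0,\ell_1,r_1,\ell_2,r_2,\ldots$, whose internal order structure mimics three levels. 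First I would fix such an $F$ with $|F|=|P_i|$ and label its elements, keeping careful track of which elements lie on the left chain $(\ell_j)$ and which on the right chain $(r_j)$, because the key feature of $C_2$ is that each $r_j$ is incomparable to the single element $\ell_j$ directly ``across'' from it while being comparable to everything else above and below.

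The heart of the argument is then a case analysis by contradiction: suppose an order-preserving injection $f:P_i\to F$ exists. The central obstruction to embedding is exactly the phenomenon that drove the whole skipping argument — within a window of two consecutive ``rows'' of $C_2$, one cannot realize certain incomparable triples, because $C_2$ is too thin (only two chains wide). Concretely, I would argue that the two incomparable triples of $P_i$ force $f$ to place three mutually related elements into a configuration that $C_2$ cannot host: for a $\Lambda$-type or $V$-type triple, the two elements on the same level of $P_i$ must map to incomparable elements of $F$, but in any length-$|P_i|$ window of $C_2$ the only incomparable pairs are the across-pairs $\{\ell_j,r_j\}$, and there are too few of them (one per row) to simultaneously accommodate both triples while preserving the comparabilities that chain the triples together through the shared middle element. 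The adjacency conditions defining edges in $G_P$ (the intersection patterns and the level-size thresholds $|L_i|=2,3,4,\ge 5$) are precisely engineered so that exactly these seven configurations fail; each $P_i$ corresponds to one such edge-type, so I would verify, poset by poset, that the specific comparability demands overflow the two-chain capacity of $F$.

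The main obstacle I anticipate is the bookkeeping in the final comparability count: one must show not merely that \emph{some} pair cannot be placed, but that \emph{every} attempted injection is blocked, which requires ruling out all orderings of the images of the $x$'s, $y$'s, and $z$'s within the window. Because the $y$-level is large (the definitions allow $n\ge 3$ or $n\ge 5$ elements there), the pigeonhole step — that the many middle elements must occupy a long contiguous stretch of $F$, leaving no room to separate the two triples onto distinct across-pairs — will need to be made precise using the exact sizes forced by each $P_i$. I expect the cleanest route is to reduce each case to the observation that $f$ restricted to the relevant three-element ``gadget'' would require two independent across-pairs $\{\ell_j,r_j\}$ and $\{\ell_{j'},r_{j'}\}$ separated by a comparable element, contradicting the tight size $|F|=|P_i|$ and the comparabilities linking the triples; the dual cases are handled by the order-reversing symmetry of $C_2$, which lets me treat only one representative of each dual pair.
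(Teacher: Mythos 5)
There is a genuine gap, on two levels. First, the mechanism you rely on is not valid in the weak-subposet setting the paper uses: an order-preserving injection only needs to satisfy $a\le_P b \Rightarrow f(a)\le f(b)$, so the two incomparable elements of a $\Lambda$- or $V$-type triple are perfectly free to land on \emph{comparable} elements of $F$. Your central obstruction --- that the triples demand more incomparable ``across-pairs'' than a thin window of $C_2$ can supply --- therefore does not block an embedding. The paper's actual proof never argues via incomparability of images; it forces the injection by counting up-sets and down-sets (e.g.\ in $P_1$ the element $x_2$ lies below $n+1$ others, so $f(x_2)$ must be an element of $F$ with at least $n+1$ elements above it, which pins it to $r_{i+1}$), and then derives a contradiction from a single missing comparability such as $r_{i+2}\not\le r_{i+3}$. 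Relatedly, your structural claim that the only incomparable pairs in a window of $C_2$ are $\{\ell_j,r_j\}$ is false: consecutive $r_j,r_{j+1}$ are also incomparable, and the paper's witnesses exploit exactly this by taking long runs of consecutive $r_j$'s (a ``fence'') plus one or two $\ell$'s, e.g.\ $F_1=\{r_{i+1},\ldots,r_{i+n+2}\}\cup\{\ell_{i+2},\ell_{i+n+1}\}$ for $P_1,P_2$, with different sets $F_2,\ldots,F_5$ for the remaining types.

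Second, and decisively, your proposed candidate $F$ --- a contiguous three-band slice $\ell_a,r_a,\ell_{a+1},\ldots$ of $C_2$ --- is not $P_i$-free in general. For $P_1$ with $n=3$ one can check that the window $\{\ell_1,r_1,\ell_2,r_2,\ell_3,r_3,\ell_4\}$ of size $7=|P_1|$ admits the order-preserving injection $x_1\mapsto r_1$, $x_2\mapsto \ell_1$, $y_1\mapsto r_2$, $y_2\mapsto \ell_3$, $y_3\mapsto \ell_2$, $z_1\mapsto r_3$, $z_2\mapsto \ell_4$, so this choice of $F$ cannot witness the proposition. Since the whole content of the statement is the exhibition and verification of correct witnesses, and your proposal neither produces them nor carries out the forcing analysis with a sound mechanism, the argument as outlined would not go through.
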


\begin{proof}
For $P_1$ and $P_2$, we verify that $F_1=\{r_{i+1}, r_{i+2},\ldots, r_{i+n+2}\}\cup\{\ell_{i+2},\ell_{i+n+1}\}$ is both $P_1$-free andr $P_2$-free. In $P_1$, the element $x_2$ is less than all elements but $x_1$, 
so we have to match it to $r_{i+1}$. By symmetry, the element $z_2$ is matched to $r_{i+n+2}$.
Meanwhile, we have to match $x_1$ to $r_{i+2}$, and $z_1$ to $r_{i+n+1}$.
Each element $y_k$, $3\le k\le n$, is greater than two elements and also greater than two elements. 
Thus, we can only match them to the elements in $\{r_{i+3},\ldots, r_{i+n}\}$. 
However, $x_1\le y_k$ for all $k\ge 3$, but $r_{i+2}$ is not less than $r_{i+3}$. 
Hence, we cannot construct an order-preserving injection from $P_1$ to $F_1$.  
In $P_2$, the element $x_1$ is less than all elements but $x_2$, so we have to match it to $r_{i+1}$. 
By symmetry, the element $z_2$ is matched to $r_{i+n+2}$.
Meanwhile, we have to match $x_2$ to $r_{i+2}$, and $z_1$ to $r_{i+n+1}$.
Then each element $y_k$, $3\le k\le n$, is less than two elements and also greater than two elements. 
Thus, we can only match them to the elements in $\{r_{i+3},\ldots, r_{i+n}\}$. 
However, $x_1\le y_k$ for $k\ge 3$ but $r_{i+2}$ is not less than $r_{i+3}$. 
Hence, we cannot construct an order-preserving injection from $P_2$ to $F_1$.  

For $P_3$, consider the subset $F_2=\{r_{i+1}, r_{i+2},\ldots, r_{i+n+4}\}\cup\{\ell_{i+2},\ell_{i+n+3}\}$.
Each element $y_k$, $2\le k\le n$, is less than three elements and also greater than three elements. 
Thus, to preserve the partial order relation, 
we can only match them to the elements in $\{r_{i+4},\ldots, r_{i+n+1}\}$. 
Clearly, this matching is not possible.

The next is $P_4$. Let $F_3=\{r_{i+1}, r_{i+2},\ldots, r_{i+n+4}\}\cup\{\ell_{i+2}\}$ .
The element $x_2$ is less than all elements but $x_1$, so we have to match it to $r_{i+1}$. 
Moreover, we have to match $x_1$ to $r_{i+2}$.
Thus, $y_1$ and $y_2$ are matched to $\ell_{i+2}$ and $r_{i+3}$, 
because all other elements, $y_3,\ldots, y_n$ and $z_1,z_2,z_3$ are greater than $x_1$.
Note that $z_1,z_2,z_3$ must be matched to $r_{i+n+2}$, $r_{i+n+3}$, and $r_{i+n+4}$, 
since they are not less than other elements in $P_4$.
However, the element $r_{i+n+1}$ is incomparable with $r_{i+n+2}$, so we can only match 
$y_1$ to it. But this cannot be done since we have matched $y_1$ to another element in $F$.

For $P_5$ and $P_6$, pick $F_4=\{\ell_{i-1}, \ell_{i}, \ell_{i+1}\}\cup\{r_{i-2}, r_{i-1}, r_i, r_{i+1}, r_{i+2}\}$.
Observe that $x_1$ is less than all elements but $x_2$ in $P_5$, hence we can only match it to $r_{i-2}$. 
Similarly, $z_2$ has to be matched to $r_{i+2}$. This forces us to match $\{y_1,y_2,y_3,y_4\}$ to $\{\ell_{i-1}, \ell_{i}, r_i, \ell_{i+1},\}$, $x_2$ to $r_{i-1}$, and $z_1$ to $r_{i+1}$.
Furthermore, this again forces us to match $\{y_1,y_2\}$ to $\{r_i,\ell_{i+1}\}$, 
and also $\{y_3,y_4\}$ to $\{r_i,\ell_{i-1}\}$, which is impossible. So we conclude that $F$ is $P_5$-free.
The element $x_1$ in $P_6$ is less than all elements but $x_2$, hence we can only match it to $r_{i-2}$. 
Also, this forces that $x_2$ has to be matched to $r_{i-1}$.
Since $y_1$, $z_1$, $z_2$, $z_3$ are all greater than $x_2$, 
the elements $\ell_{i-1}$ and $r_i$ can only be matched by $y_2$ and $y_3$.
However, $r_i$ is less than only two elements in $F_4$, 
but both $y_2$ and $y_3$ are less than three elements in $P_6$. 
The partial order cannot be preserved.   
Hence $F_4$ is $P_6$-free.

Finally, we take $F_5=\{\ell_{i-2}, \ell_i, \ell_{i+2}\}\cup \{r_{i-2}, r_{i-1}, r_i, r_{i+1}, r_{i+2}\}$, 
and show it is $P_7$-free.
The element $y_1$ is less than one element and greater than three elements in $P_7$. 
Hence we can only match it to either $\ell_i$ or $r_{i+1}$. 
Similarly, the element $y_2$ can only be matched to $\ell_i$ or $r_{i-1}$.
So either $y_1$ is matched to $r_{i+1}$, or $y_2$ is matched to $r_{i-1}$.
By symmetry, we may assume that $y_1$ is matched to $r_{i+1}$. 
Then this forces $y_2$ being matched to $r_{i-1}$, 
since $\ell_i$ is only less than two elements except for $r_{i+1}$,
and $r_{i+1}$ is matched by $y_1$ already. 
However, we then have to match $\{z_1,z_2,z_3\}$ to $\{\ell_{i+2},r_{i+2},\ell_{i}\}$, 
and also $\{x_1,x_2,x_3\}$ to $\{\ell_{i-2},r_{i-2},\ell_{i}\}$. 
This is obviously impossible.
\end{proof}

Hence, we can reduce at most $\alpha(G_P)$ elements from the $h-1$ skipped ones when constructing an order preserving injection from a graded poset $P$ to a subset of $C_2$. 

\section{Concluding remarks} 

The quantity $\alpha(G_P)$ is not a well-studied parameter of posets. 
In addition, finding the independence number of a graph $G$ is a 
NP-complete problem in general~\cite{Coo,Kar}. 
Then, why do we derive such an unusual upper bound? 
In this section, we point two features of our upper bound. 
One is that finding the independence number of the auxiliary graph of the graded posets can be done in the polynomial time; 
the other is that we can use the incomparable triples to construct posets satisfying Conjecture~\ref{Conj:GriLu}. 

\subsection{Computing $\alpha(G_P)$ in polynomial time}

Given a subset $U$ of the vertex set of a graph, let $G[U]$ be the graph induced by vertices in $U$. 
We partition $V(G_P)$ into $V_1\cup V_2\cup\cdots\cup V_{h-1}$ 
such that $V_j$ is the set of vertices corresponding to the triples of incomparable elements in $L_j\cup L_{j+1}$.
Then each $G[V_j]$ is a clique in $G_P$ by definition. 
Hence, any independent set of $G_P$ contains at most one vertex in each $V_j$.
Moreover, the following proposition holds for $G_P$.

\begin{proposition}\label{ama} 
For any $1\le j\le h-1$, we have
\[
\alpha(G[\cup_{i=1}^j V_i])=\max_{v\in V_j}\{\alpha(G[(\cup_{i=1}^{j-1} V_i)\cup\{v\}])\}.
\]
\end{proposition}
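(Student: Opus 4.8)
The plan is to prove the two inequalities separately, relying on just two facts: that each $G[V_j]$ is a clique (which holds by definition of $G_P$, since any two vertices whose triples both lie in $L_j\cup L_{j+1}$ are adjacent), and that the independence number is monotone under the addition of a single vertex. For brevity I would write $A=\cup_{i=1}^{j-1}V_i$ and $B=V_j$, so that the left-hand side is $\alpha(G[A\cup B])$ and the claim becomes $\alpha(G[A\cup B])=\max_{v\in B}\{\alpha(G[A\cup\{v\}])\}$.

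For the bound $\alpha(G[A\cup B])\le \max_{v\in B}\{\alpha(G[A\cup\{v\}])\}$, I would take a maximum independent set $S$ of $G[A\cup B]$. Since $G[B]$ is a clique, $S$ meets $B$ in at most one vertex, giving two cases. If $S\cap B=\{v\}$ for some $v\in B$, then $S\subseteq A\cup\{v\}$ and $S$ is still independent there, so $|S|\le \alpha(G[A\cup\{v\}])$, which is dominated by the right-hand maximum. If instead $S\cap B=\varnothing$, then $S\subseteq A$, so $|S|\le \alpha(G[A])$; fixing any $v_0\in B$ and using that adding one vertex cannot decrease the independence number, $\alpha(G[A])\le \alpha(G[A\cup\{v_0\}])\le \max_{v\in B}\{\alpha(G[A\cup\{v\}])\}$. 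In both cases $|S|$ is at most the right-hand side.

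The reverse inequality is immediate: for each $v\in B$ the graph $G[A\cup\{v\}]$ is an induced subgraph of $G[A\cup B]$, hence $\alpha(G[A\cup\{v\}])\le \alpha(G[A\cup B])$, and this survives taking the maximum over $v$. Combining the two directions yields the stated equality. I do not expect a genuine obstacle here; the only point that needs care is the case where an optimal independent set avoids $V_j$ entirely, which is precisely where the monotonicity of $\alpha$ is invoked so that this case is still dominated by some term on the right (one should also note the harmless convention $V_j\neq\varnothing$, since otherwise the recursion trivially reduces to $\alpha(G[\cup_{i=1}^{j-1}V_i])$). The real leverage of the layered structure of $G_P$ enters not in this proposition but in the companion observation that each $\alpha(G[A\cup\{v\}])$ is itself computable by tracking only the restriction of the chosen vertices to the single shared level $L_j$, which is what turns this recursion into a polynomial-time dynamic program and which I would develop separately.
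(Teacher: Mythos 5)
Your proposal is correct and follows essentially the same route as the paper's proof: both directions are handled identically, using that $G[V_j]$ is a clique to split into the cases where the maximum independent set meets $V_j$ in exactly one vertex or not at all, with monotonicity of $\alpha$ under adding a vertex covering the latter case. No substantive difference from the paper's argument.
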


\begin{proof}
The left-hand side is never smaller than the right-hand side in the equality, 
since $G[(\cup_{i=1}^{j-1} V_i)\cup\{v\}]$ is a subgraph of $G[\cup_{i=1}^j V_i]$ for any $v\in V_j$. 
To show the other direction, 
if a maximum independent set of $G[\cup_{i=1}^j V_i]$ contains no vertex in $V_j$, then 
\[\alpha(G[\cup_{i=1}^j V_i])=\alpha(G[\cup_{i=1}^{j-1} V_i])\le \alpha(G[(\cup_{i=1}^{j-1} V_i)\cup\{v\}]);\]
Else, it contains exactly one vertex $v\in V_j$, then $\alpha(G[\cup_{i=1}^j V_i])= \alpha(G[(\cup_{i=1}^{j-1} V_i)\cup\{v\}])$. Hence, 
$\alpha(G[\cup_{i=1}^j V_i])\le \max_{v\in V_j}\{\alpha(G[(\cup_{i=1}^{j-1} V_i)\cup\{v\}])\}$.
\end{proof}

\begin{proposition}\label{aa1} 
For $v\in V_j$,
\[
\alpha(G[(\cup_{i=1}^{j-1} V_i)\cup\{v\}])=
\max_{\substack{ v'\in V_{j-1}\cap N(v)\\v''\in V_{j-1}- N(v)}}
\{\alpha(G[(\cup_{i=1}^{j-2} V_i)\cup\{v'\}]),\alpha(G[(\cup_{i=1}^{j-2} V_i)\cup\{v''\}])+1\}
.\]
\end{proposition}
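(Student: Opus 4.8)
The plan is to exploit the ``path-like'' structure of $G_P$ established just before the statement: each $G[V_i]$ is a clique, and by the adjacency rules a vertex of $V_i$ can only be adjacent to vertices of $V_{i-1}$, $V_i$, or $V_{i+1}$. In particular the vertex $v\in V_j$ has no neighbour in $\cup_{i=1}^{j-2}V_i$, so inside $H:=G[(\cup_{i=1}^{j-1}V_i)\cup\{v\}]$ the only edges incident to $v$ run to $V_{j-1}\cap N(v)$. Writing $g(u):=\alpha(G[(\cup_{i=1}^{j-2}V_i)\cup\{u\}])$ for $u\in V_{j-1}$, the goal is to show that $\alpha(H)$ equals $\max\{\max_{v'\in V_{j-1}\cap N(v)}g(v'),\ \max_{v''\in V_{j-1}-N(v)}(g(v'')+1)\}$, and I would prove the two inequalities separately.

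For the lower bound I would simply exhibit the relevant independent sets. Given $v'\in V_{j-1}\cap N(v)$, a maximum independent set of $G[(\cup_{i=1}^{j-2}V_i)\cup\{v'\}]$ already lives inside $H$, so $\alpha(H)\ge g(v')$. Given $v''\in V_{j-1}-N(v)$, take a maximum independent set $T$ witnessing $g(v'')$; the only $V_{j-1}$-vertex $T$ can contain is $v''$, and since $v$ is adjacent neither to $v''$ nor to anything in $\cup_{i=1}^{j-2}V_i$, the set $T\cup\{v\}$ is independent in $H$, giving $\alpha(H)\ge g(v'')+1$.

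For the upper bound I would take a maximum independent set $S$ of $H$ and condition on $S\cap V_{j-1}$, which has at most one element because $V_{j-1}$ is a clique. If $v\notin S$, then $S$ is independent in $G[\cup_{i=1}^{j-1}V_i]$, and Proposition~\ref{ama} applied at level $j-1$ bounds $|S|$ by $\max_{u\in V_{j-1}}g(u)$, which is dominated by the claimed right-hand side. If $v\in S$ and $S\cap V_{j-1}=\{v''\}$, then $v''\notin N(v)$ and $S\setminus\{v\}$ lies in $(\cup_{i=1}^{j-2}V_i)\cup\{v''\}$, so $|S|\le g(v'')+1$. The remaining case is $v\in S$ with $S\cap V_{j-1}=\varnothing$: here $S\setminus\{v\}$ must be a \emph{maximum} independent set of $\cup_{i=1}^{j-2}V_i$ (otherwise we could enlarge it and re-attach the isolated $v$, contradicting maximality of $S$), so $|S|=\alpha(G[\cup_{i=1}^{j-2}V_i])+1$; as long as some $v''\in V_{j-1}-N(v)$ exists we have $g(v'')\ge\alpha(G[\cup_{i=1}^{j-2}V_i])$ and hence $|S|\le g(v'')+1$.

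The main obstacle, and the only place the argument is not completely routine, is the degenerate situation in which $v$ is adjacent to \emph{every} vertex of $V_{j-1}$, so that $V_{j-1}-N(v)=\varnothing$; by the adjacency rules this happens in particular when $|L_j|=2$. There the term $g(v'')+1$ is vacuous, and one must instead verify directly that a maximum independent set containing $v$ but avoiding $V_{j-1}$ cannot exceed $\max_{v'\in V_{j-1}}g(v')=\alpha(G[\cup_{i=1}^{j-1}V_i])$; equivalently, that passing from levels $1,\dots,j-2$ to $1,\dots,j-1$ has already gained the extra unit. I would close this gap either by adopting the natural convention that the choice of $v''$ may be empty (reading the second term as $\alpha(G[\cup_{i=1}^{j-2}V_i])+1$ when $V_{j-1}-N(v)=\varnothing$), or by a short separate argument using the clique structure that $|L_j|=2$ forces on $V_{j-2}\cup V_{j-1}$. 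Pinning down this boundary case cleanly is the part that needs the most attention, everything else following directly from the path-like, cliqued layering of $G_P$.
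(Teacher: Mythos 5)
Your proof is correct and follows essentially the same route as the paper's: both directions are handled by exhibiting the relevant independent sets for the lower bound and, for the upper bound, by analyzing how a maximum independent set of $G[(\cup_{i=1}^{j-1}V_i)\cup\{v\}]$ meets $\{v\}$ and $V_{j-1}$ (the paper conditions instead on whether the left-hand side equals $\alpha(G[\cup_{i=1}^{j-1}V_i])$ or exceeds it by one, which amounts to the same case split). The degenerate situation you flag, $V_{j-1}-N(v)=\varnothing$, is not addressed in the paper either --- its final case silently invokes ``some $v''\in V_{j-1}-N(v)$'' --- so your explicit convention for reading the second term when that set is empty is a small genuine improvement rather than a deviation.
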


\begin{proof} 
Define $LHS:=\alpha(G[(\cup_{i=1}^{j-1} V_i)\cup\{v\}])$ and 
\[RHS:=\max_{\substack{ v'\in V_{j-1}\cap N(v)\\v''\in V_{j-1}- N(v)}}
\{\alpha(G[(\cup_{i=1}^{j-2} V_i)\cup\{v'\}]),\alpha(G[(\cup_{i=1}^{j-2} V_i)\cup\{v''\}])+1\}
.\]
Observe that the graph $G[(\cup_{i=1}^{j-2} V_i)\cup\{v'\}]$ is a subgraph of $G[(\cup_{i=1}^{j-1} V_i)\cup\{v\}]$ 
for any $v\in V_{j-1}$.
Also, the union of an independent set of $G[(\cup_{i=1}^{j-2} V_i)\cup\{v''\}]$ and $\{v\}$ is 
an independent set of $G[(\cup_{i=1}^{j-1} V_i)\cup\{v\}]$ for any $v''\in V_{j-1}- N(v)$.
The above two facts suffices to show that $RHS\le LHS $.
It remains to show $LHS\le RHS$. Note that 
$LHS$ can only be equal to either $\alpha(G[(\cup_{i=1}^{j-1} V_i)])$ or $\alpha(G[(\cup_{i=1}^{j-1} V_i)])+1$.

Assume that  $LHS= \alpha(G[(\cup_{i=1}^{j-1} V_i)])$.
Let $I$ be any maximum independent set of $G[\cup_{i=1}^{j-1} V_i]$. 
Then $I$ must contain some vertex $v'\in V_{j-1} \cap N(v)$, or we can add $v$ to 
$I$ to get an independent set of $G[(\cup_{i=1}^{j-1} V_i)\cup\{v\}]$ with size 
one greater than $\alpha(G[(\cup_{i=1}^{j-1} V_i)])$.
This contradicts our assumption.
Hence, $I$ is an independent set of $G[(\cup_{i=1}^{j-2} V_i)\cup\{v'\}]$ for 
some  $v'\in V_{j-1}\cap N(v)$. Thus, 
$LHS=|I|\le RHS$.

Now assume that  $LHS= \alpha(G[(\cup_{i=1}^{j-1} V_i)])+1$.
Let $I'$ be any maximum independent set of $G[(\cup_{i=1}^{j-1} V_i)\cup\{v\}]$. 
Then the size condition gives that $I'$ must contain $v$, 
and the set $J=I'-\{v\}$ is a maximum independent set of $G[(\cup_{i=1}^{j-1} V_i)]$.
If $J\cap V_{j-1}\neq\varnothing$, then it contains exactly one element which 
belongs to $V_{j-1}-N(v)$, say $v''$. Else, $J\cap V_{j-1}=\varnothing$, 
and it is contained in $\cup_{i=1}^{j-2} V_i$. 
Both cases imply that $J$ is an independent set of 
$G[(\cup_{i=1}^{j-2} V_i)\cap \{v''\}]$ for some $v''\in V_{j-1}-N(v)$.  
Thus, $|J|+1\le \alpha(G[(\cup_{i=1}^{j-2} V_i)\cap \{v''\}])+1$ for some $v''\in V_{j-1}-N(v)$.
Consequently, $LHS=|I'|\le RHS$. \end{proof}

The idea of finding $\alpha(G_P)$ in polynomial time is the following:
For each vertex $v$ in $V_j$ of $V(G_P)$, we iteratively compute 
 $\alpha(G[(\cup_{i=1}^{j-1} V_i)\cup\{v\}])$ for $j=1,\ldots,h-1$.
Initially, $\alpha(G[\{v\}])=1$ for any $v\in V_1$. 
For $v\in V_j$ and $j\ge 2$, we compute $\alpha(G[(\cup_{i=1}^{j-1} V_i)\cup\{v\}])$ using Proposition~\ref{aa1}.
To determine $\alpha(G[(\cup_{i=1}^{j-1} V_i)\cup\{v\}])$, we need to compare at most $|V_{j-1}|$ values. 
This can be done in no more than $|V(G)|$ steps. Therefore, determining all 
$\alpha(G[(\cup_{i=1}^{j-1} V_i)\cup\{v\}])$ for all $v\in V(G_P)$ takes at most $|V(G_P)|^2$ steps. 
After we acquire $\alpha(G[(\cup_{i=1}^{j-1} V_i)\cup\{v\}])$ for every $v\in V_j$, $j=1,\ldots,h-1$,
we compare $\alpha(G[(\cup_{i=1}^{h-2} V_i)\cup\{v\}])$ for $v\in V_{h-1}$ to get $\alpha(G_P)$. 
The whole process can be done in $O(|V(G_P)|^2)$ steps.
%We present a pseudo code in Appendix.

\subsection{The posets with $e(P)=\pi(P)$}

The results in~\cite{BurNag} contain not only the bound $\La(C_2,P)\le |P|+h-2$,
 but also the posets in~Figure~\ref{eep}, for which the equality $e(P)=\frac{1}{2}(|P|+h-2)$ holds. 
Furthermore, let $P_1$ and $P_2$ be any two posets satisfying the equality. 
Then the poset $P_1\oplus P_2$, obtained by $P_1$ and $P_2$ adding the relations $a < b$ for all $a\in P_1$ and $ b \in P_2$, and the poset $P_1\otimes P_2$, 
obtained by identifying the greatest element of $P_1$ to the least element of $P_2$ 
whenever $P_1$ has a greatest element and $P_2$ has a least element, 
also satisfy the equality (see~\cite{BurNag}, Lemma 4.4).
By Theorem~\ref{BurNag}, all posets above satisfy Conjecture~\ref{Conj:GriLu}.
\begin{figure}[ht]
\begin{center}

$V$
\begin{picture}(15,60)
\put(5,20){\circle*{4}}
\end{picture}
$B$
\begin{picture}(40,60) 
\put(5,10){\circle*{4}}
\put(25,10){\circle*{4}}
\put(25,30){\circle*{4}}
\put(5,30){\circle*{4}}
 \put(25,10){\line(-1,1){20}}
\put(25,10){\line(0,1){20}}
\put(5,10){\line(0,1){20}}
\put(5,10){\line(1,1){20}}
\end{picture}
$D_3$
\begin{picture}(60,60)
\put(5,30){\circle*{4}}
\put(25,10){\circle*{4}}
\put(25,30){\circle*{4}}
\put(25,50){\circle*{4}}
\put(45,30){\circle*{4}}
\put(25,10){\line(0,1){40}}
\put(25,10){\line(1,1){20}}
\put(25,10){\line(-1,1){20}}
\put(25,50){\line(1,-1){20}}
\put(25,50){\line(-1,-1){20}}
\end{picture}
$Q$
\begin{picture}(60,60)
\put(5,30){\circle*{4}}
\put(15,10){\circle*{4}}
\put(35,10){\circle*{4}}
\put(25,30){\circle*{4}}
\put(15,50){\circle*{4}}
\put(35,50){\circle*{4}}
\put(45,30){\circle*{4}}
\put(15,10){\line(1,2){10}}
\put(15,10){\line(-1,2){10}}
\put(35,10){\line(1,2){10}}
\put(35,10){\line(-1,2){10}}
\put(15,50){\line(1,-2){10}}
\put(15,50){\line(-1,-2){10}}
\put(35,50){\line(1,-2){10}}
\put(35,50){\line(-1,-2){10}}
\put(15,10){\line(3,2){30}}
\put(35,10){\line(-3,2){30}}
\put(15,50){\line(3,-2){30}}
\put(35,50){\line(-3,-2){30}}
\end{picture}
$S$
\begin{picture}(80,60)
\put(5,30){\circle*{4}}
\put(35,10){\circle*{4}}
\put(25,30){\circle*{4}}
\put(25,50){\circle*{4}}
\put(45,50){\circle*{4}}
\put(45,30){\circle*{4}}
\put(65,30){\circle*{4}}
\put(35,10){\line(1,2){10}}
\put(35,10){\line(-1,2){10}}
\put(35,10){\line(3,2){30}}
\put(35,10){\line(-3,2){30}}
\put(25,50){\line(0,-1){20}}
\put(25,50){\line(-1,-1){20}}
\put(25,50){\line(1,-1){20}}
\put(25,50){\line(2,-1){40}}
\put(45,50){\line(0,-1){20}}
\put(45,50){\line(1,-1){20}}
\put(45,50){\line(-1,-1){20}}
\put(45,50){\line(-2,-1){40}}
\end{picture}
$R$
\begin{picture}(80,75)
\put(5,30){\circle*{4}}
\put(35,10){\circle*{4}}
\put(25,30){\circle*{4}}
\put(25,50){\circle*{4}}
\put(45,50){\circle*{4}}
\put(45,30){\circle*{4}}
\put(65,30){\circle*{4}}
\put(35,70){\circle*{4}}
\put(5,50){\circle*{4}}
\put(65,50){\circle*{4}}
\put(35,10){\line(1,2){10}}
\put(35,10){\line(-1,2){10}}
\put(35,10){\line(3,2){30}}
\put(35,10){\line(-3,2){30}}
\put(25,50){\line(0,-1){20}}
\put(25,50){\line(-1,-1){20}}
\put(25,50){\line(1,-1){20}}
\put(25,50){\line(2,-1){40}}
\put(45,50){\line(0,-1){20}}
\put(45,50){\line(1,-1){20}}
\put(45,50){\line(-1,-1){20}}
\put(45,50){\line(-2,-1){40}}
\put(35,70){\line(1,-2){10}}
\put(35,70){\line(-1,-2){10}}
\put(35,70){\line(3,-2){30}}
\put(35,70){\line(-3,-2){30}}
\put(5,50){\line(0,-1){20}}
\put(5,50){\line(1,-1){20}}
\put(5,50){\line(3,-1){60}}
\put(5,50){\line(2,-1){40}}
\put(65,50){\line(0,-1){20}}
\put(65,50){\line(-1,-1){20}}
\put(65,50){\line(-3,-1){60}}
\put(65,50){\line(-2,-1){40}}
\end{picture}
\end{center}
\caption{Posets with $e(P)=\frac{1}{2}(|P|+h-2)$}\label{eep}
\end{figure}
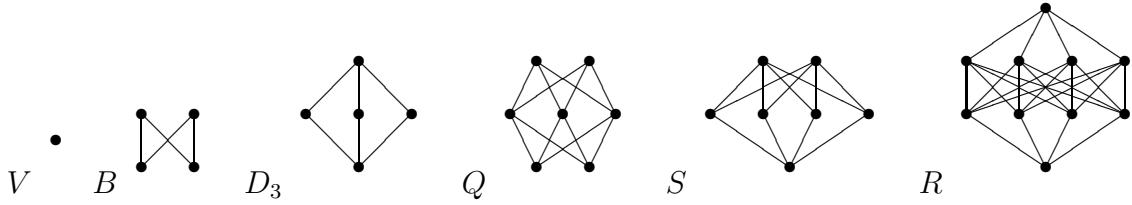
Imitating the previous ideas, 
we seek for posets which hold the equality 
\begin{equation}\label{ep}
e(P)=\frac{1}{2}(|P|+h-\alpha(G_P)-2). 
\end{equation}
This kind of posets satisfy Conjecture~\ref{Conj:GriLu} by Corollary~\ref{maincor}.
Observe that all posets in Figure~\ref{eep}, 
and the posets obtained by applying the operations $\otimes$ or $\oplus$ to them have $\alpha(G_P)=0$. 
So, in fact, these posets satisfy Equality~(\ref{ep}).
Does there exist other poset also satisfying Equality~(\ref{ep})?

Although we can compute $\alpha(G_P)$ in polynomial time, it was proven that 
computing $e(P)$ is NP-compete~\cite{Pal}. 
Thus, finding the target posets by computing these parameters is impractical. 
Instead, we will develop a method to construct posets that satisfy Equality~(\ref{ep}).  
Fix a graded poset $P$. 
We say that we perform a {\em $\Lambda$-extention to $P$ at level $i$}, 
if we add a new element $x$ to $P$, and  
the new relations that $x$ is less than any element in $L_{i+1}$  
and $x$ is greater than all but two elements in $L_{i-1}$ to get a new poset $P'$.
We only perform a $\Lambda$-extention to $P$ at level $i$ when $|L_{i-1}|\ge 3$. 
Note that the resulting poset $P'$ is not unique.
Analogously, we can perform a {\em $V$-extention on $P$ at level $i$} by adding a new element $x$ to $P$, 
and the new relations that $x$ is greater than any element in $L_{i-1}$  
and $x$ is less than all but two elements in $L_{i+1}$ to get a new poset.
Also, performing a $V$-extention to $P$ at level $i$ requires $|L_{i+1}|\ge 3$.
Each operation defined above gives a new vertex to the auxiliary graph.
So the independence number of the new graph $G_{P'}$ might increase by one.
Let us perform a $\Lambda$-extention or a $V$-extention to a poset $P$ with $\alpha(G_P)=0$, 
constructed by Burcsi and Nagy, to get a new poset $P'$. 
The next argument suffices to show that $P'$ satisfies Equality~(\ref{ep}):
\begin{eqnarray*}
\frac{1}{2}(|P|+h-2) 
&=&e(P)\le e(P')\\
&\le& \frac{1}{2}(|P'|+h-\alpha(G_{P'})-2)\\
&=&\frac{1}{2}(|P|+1+h-1-2)\\
&=&\frac{1}{2}(|P|+h-2).
\end{eqnarray*}
If $P''$ is a supboset of $P'$, and contains $P$ as a subposet, 
then $P''$ is graded and also satisfies Equality~(\ref{ep}) using above argument.
So, all the posets $P$ $P'$ and $P''$ satisfy Conjecture~\ref{Conj:GriLu}.

Performing the extentions many times to a poset is allowed.
Whenever the extention we perform to the poset every time can enlarge the independence number of the auxiliary graph, we obtain more posets satisfying Conjecture~\ref{Conj:GriLu}.
For example, the middle poset $S'$ in Figure~\ref{eepa} can be obitaned by performing a $\Lambda$-extension at level 3 and a $V$-extension at level 1 to $S$. 
The auxiliary graph of this poset is $2K_1$. Hence $\alpha(G_P)=2$.
The right poset $S''$ is a subposet of $S'$ and contains $S$ as a suboset.
Its auxiliary graph is the graph obtained by removing two incident edges from $K_4$, 
which has $\alpha(G_P)=2$.
Our theorem shows that both $S'$ and $S''$ satisfy Conjecture~\ref{Conj:GriLu}! 

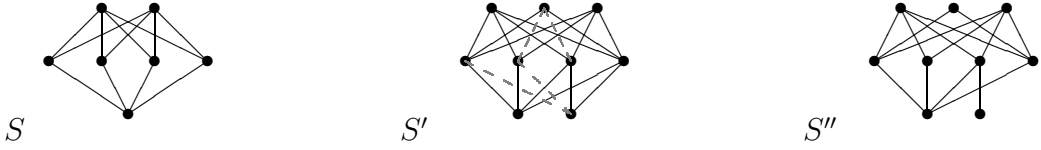
\begin{figure}[ht]
\begin{center}
 
$S$
\begin{picture}(80,60)
\put(5,30){\circle*{4}}
\put(35,10){\circle*{4}}
\put(25,30){\circle*{4}}
\put(25,50){\circle*{4}}
\put(45,50){\circle*{4}}
\put(45,30){\circle*{4}}
\put(65,30){\circle*{4}}
\put(35,10){\line(1,2){10}}
\put(35,10){\line(-1,2){10}}
\put(35,10){\line(3,2){30}}
\put(35,10){\line(-3,2){30}}
\put(25,50){\line(0,-1){20}}
\put(25,50){\line(-1,-1){20}}
\put(25,50){\line(1,-1){20}}
\put(25,50){\line(2,-1){40}}
\put(45,50){\line(0,-1){20}}
\put(45,50){\line(1,-1){20}}
\put(45,50){\line(-1,-1){20}}
\put(45,50){\line(-2,-1){40}}
\end{picture}
\hspace{50pt} 
$S'$
\begin{picture}(80,60)
\put(30,10){\circle*{4}}
\put(50,10){\circle*{4}}
\put(10,30){\circle*{4}}
\put(30,30){\circle*{4}}
\put(50,30){\circle*{4}}
\put(70,30){\circle*{4}}
\put(20,50){\circle*{4}}
\put(40,50){\circle*{4}}
\put(60,50){\circle*{4}}

\put(40,50){\line(3,-2){30}}
\put(40,50){\line(-3,-2){30}}

\put(20,50){\line(5,-2){50}}
\put(20,50){\line(3,-2){30}}
\put(20,50){\line(1,-2){10}}
\put(20,50){\line(-1,-2){10}}

\put(60,50){\line(-5,-2){50}}
\put(60,50){\line(-3,-2){30}}
\put(60,50){\line(1,-2){10}}
\put(60,50){\line(-1,-2){10}}

\put(30,10){\line(0,1){20}}
\put(30,10){\line(-1,1){20}}
\put(30,10){\line(1,1){20}}
\put(30,10){\line(2,1){40}}
\put(50,10){\line(0,1){20}}
\put(50,10){\line(1,1){20}}

\textcolor{gray}
{
\dashline{4}(50,10)(30,30)
\dashline{4}(50,10)(10,30)
\dashline{4}(40,50)(30,30)
\dashline{4}(40,50)(50,30)
}
\end{picture}
\hspace{50pt} 
$S''$
\begin{picture}(80,60)
\put(30,10){\circle*{4}}
\put(50,10){\circle*{4}}
\put(10,30){\circle*{4}}
\put(30,30){\circle*{4}}
\put(50,30){\circle*{4}}
\put(70,30){\circle*{4}}
\put(20,50){\circle*{4}}
\put(40,50){\circle*{4}}
\put(60,50){\circle*{4}}

\put(40,50){\line(3,-2){30}}
\put(40,50){\line(-3,-2){30}}

\put(20,50){\line(5,-2){50}}
\put(20,50){\line(3,-2){30}}
\put(20,50){\line(1,-2){10}}
\put(20,50){\line(-1,-2){10}}

\put(60,50){\line(-5,-2){50}}
\put(60,50){\line(-3,-2){30}}
\put(60,50){\line(1,-2){10}}
\put(60,50){\line(-1,-2){10}}

\put(30,10){\line(0,1){20}}
\put(30,10){\line(-1,1){20}}
\put(30,10){\line(1,1){20}}
\put(30,10){\line(2,1){40}}
\put(50,10){\line(0,1){20}}

\end{picture}

\end{center}
\caption{Posets with $e(P)=\frac{1}{2}(|P|+h-2-\alpha(G_{P}))$}\label{eepa}
\end{figure}

\Remark Although a poset $P$ can always be partitioned into $h$ antichains by Mirsky's theorem, 
the partition is not unique if the poset $P$ is not graded. For example, let $P$ be the poset consisting 
of elements $u_i$, $1\le i\le 4$, and $v_j$, $1\le j\le 12$ with the partial order relations 
$u_i\le u_{i'}$ if $i\le i'$, and $v_j\le u_4$ for $1\le j\le 12$. 
We can partition $P$ into $A_1=\{u_1\}$, $A_2=\{u_2\}$, $A_3=\{u_3,v_1,v_2,\ldots,v_{12}\}$ and $A_4=\{u_4\}$, or $A'_1=\{u_1,v_1,v_2,v_3,v_4\}$, $A'_2=\{u_2,v_5,v_6,v_7,v_8\}$, $A'_3=\{u_3,v_9,v_{10},v_{11},v_{12}\}$, and $A'_4=\{u_4\}$. 
In the first partition the auxiliary graph $G_P$ is the complete graph $K_{66}$, 
while in the second partition it will be a graph on $24$ vertices. 
To prevent the confusion, we only studied the graded posets.

\end{document}